 \font \eightrm=cmr8
 \newcommand{\nc}{\newcommand}
\newtheorem{thm}{Theorem}
\newtheorem{cor}[thm]{Corollary}
\newtheorem{prop}[thm]{Proposition}
\newtheorem{defn}{Definition}
\newtheorem{rmk}[thm]{Remark}
\def\bcdot{\scalebox{2}{\hbox{$.$}}}
\nc{\rhdi}{\overline{\rhd}}
\nc{\lgl}{[\![}
\nc{\rgl}{]\!]}
\nc{\ignore}[1]{{}}
\nc{\mrm}[1]{{\rm #1}}
\nc{\dirlim}{\displaystyle{\lim_{\longrightarrow}}\,}
\nc{\invlim}{\displaystyle{\lim_{\longleftarrow}}\,}
\nc{\vep}{\varepsilon} \nc{\ep}{\epsilon}
\nc{\sigmat}{\widetilde\sigma}
\nc{\ostar}{\overline{*}}
\nc{\mchar}{\mrm{Char}}
\nc{\Hom}{\mrm{Hom}}
\nc{\id}{\mrm{id}}
\nc{\remark}{\noindent{\bf{Remark:}}}
\nc{\remarks}{\noindent{\bf{Remarks:}}}
 \nc{\delete}[1]{}
 \nc{\grad}[1]{^{({#1})}}
 \nc{\fil}[1]{_{#1}}
\nc{\BA}{{\Bbb A}} \nc{\CC}{{\Bbb C}} \nc{\DD}{{\Bbb D}}
\nc{\EE}{{\Bbb E}} \nc{\FF}{{\Bbb F}} \nc{\GG}{{\Bbb G}}
\nc{\HH}{{\Bbb H}} \nc{\LL}{{\Bbb L}} \nc{\NN}{{\Bbb N}}
\nc{\PP}{{\Bbb P}} \nc{\QQ}{{\Bbb Q}} \nc{\RR}{{\Bbb R}}
\nc{\TT}{{\Bbb T}} \nc{\VV}{{\Bbb V}} \nc{\ZZ}{{\Bbb Z}}
\nc{\Cal}[1]{{\mathcal {#1}}}
\nc{\mop}[1]{\mathop{\hbox {\rm #1} }\nolimits}
\nc{\smop}[1]{\mathop{\hbox {\eightrm #1} }\nolimits}
\nc{\mopl}[1]{\mathop{\hbox {\rm #1} }\limits}
\nc{\frakg}{{\frak g}}
\nc{\g}[1]{{\frak {#1}}}
\def \restr#1{\mathstrut_{\textstyle |}\raise-8pt\hbox{$\scriptstyle #1$}}
\def \srestr#1{\mathstrut_{\scriptstyle |}\hbox to
  -1.5pt{}\raise-4pt\hbox{$\scriptscriptstyle #1$}}
\nc{\wt}{\widetilde}
\nc{\wh}{\widehat}
\nc{\un}{\hbox{\bf 1}}
\nc{\redtext}[1]{\textcolor{red}{\tt #1}}
\nc{\bluetext}[1]{\textcolor{blue}{#1}}
\nc{\comment}[1]{[[{\tt {#1}}]] }
\nc{\R}{{\mathbb R}}
\nc\fleche[1]{\mathop{\hbox to #1 mm{\rightarrowfill}}\limits}
\def\semi{\mathrel{\times}\kern -.85pt\joinrel\mathrel{\raise 1.4pt\hbox{${\scriptscriptstyle |}$}}}
\def\ta1{{\scalebox{0.2}{ 
\begin{picture}(12,12)(38,-38)
\SetWidth{0.5} \SetColor{Black} \Vertex(45,-33){5.66}
\end{picture}}}}
\begin{document}

\title[Post-groups]
      {Free post-groups, post-groups from group actions,\\ and post-Lie algebras}
                  
\author[M.~J.~H.~Al-Kaabi]{Mahdi Jasim Hasan Al-Kaabi}
\address{Mathematics Department, 
		College of Science, Mustansiriyah University, Palestine Street, 
		P.O.Box 14022, 
		Baghdad, IRAQ.}       
         \email{mahdi.alkaabi@uomustansiriyah.ed.iq}
      
\author[K.~Ebrahimi-Fard]{Kurusch Ebrahimi-Fard}
\address{Department of Mathematical Sciences, 
		NTNU, NO 7491 Trondheim, Norway. 
		Centre for Advanced Study CAS, Drammensveien 78, 
		0271 Oslo, Norway.}
		\email{kurusch.ebrahimi-fard@ntnu.no}
		\urladdr{https://folk.ntnu.no/kurusche/}
         
\author[D.~Manchon]{Dominique Manchon}
\address{LMBP,
         C.N.R.S.-Universit\'e Clermont-Auvergne,
         3 place Vasar\'ely, CS 60026,
         63178 Aubi\`ere, France}       
         \email{Dominique.Manchon@uca.fr}
         \urladdr{https://lmbp.uca.fr/~manchon/}

\date{\today}

\begin{abstract}
After providing a short review on the recently introduced notion of post-group by Bai, Guo, Sheng and Tang, we exhibit post-group counterparts of important post-Lie algebras in the literature, including the infinite-dimensional post-Lie algebra of Lie group integrators. The notion of free post-group is examined, and a group isomorphism between the two group structures associated to a free post-group is explicitly constructed.
\end{abstract}

\maketitle


\tableofcontents


\section{Introduction}
\label{sect:intro}

The notions of pre- and post-group have been put forward recently by Chengming Bai, Li Guo, Yunhe Sheng and Rong Tang in the work \cite{BGST2023}. Both motivation and terminology derive from the corresponding infinitesimal objects known as pre- and post-Lie algebras. The former simultaneously appeared sixty years ago in the works of Murray Gerstenhaber \cite{G1963} and Ernst Vinberg \cite{V1963}. The idea can be traced even further back to work by Arthur Cayley \cite{C1857} on the connection between trees and vector fields. The notion of post-Lie algebra, on the other hand, appeared far more recently, in works by Bruno Vallette in 2007 \cite{BV2007} and, independently, by Hans Munthe-Kaas and Will Wright in 2008 \cite{MKW2008} who introduced the closely related notion of $D$-algebra and highlighted their relevance in the context of Klein geometries. In fact, pre- as well as post-Lie algebras arise naturally in the context of the geometry of invariant connections defined on manifolds. Examples of pre- and post-groups are easily identified in the works by Daniel Guin and Jean-Michel Oudom \cite{GO2008} respectively by Alexander Lundervold, Hans Munthe--Kaas and the second author~\cite{LEFMK2015}.

\medskip

A post-group is a group $(G,.)$ together with a family $(L_a^\rhd)_{a \in G}$ of group automorphisms such that the relation
$$
	L^\rhd_a \circ L^\rhd_b=L^\rhd_{a . L^\rhd_a(b)}
$$
holds for any $a,b \in G$. The unit of $(G,.)$ is denoted by $e$. The triangle $\rhd$ denotes the additional binary product $a \rhd b =: L_a^\rhd (b)$ defined on $G$. A pre-group is a post-group in which the group law $.$ is commutative. The definition of a post-group formalises properties of group-like elements in the completion of the enveloping algebra of a post-Lie algebra (in a pre-Lie algebra for a pre-group) \cite{GO2008,LEFMK2015}. Furthermore, one observes that the notion of post-group is equivalent to two notions closely related to set-theoretical solutions of the Yang--Baxter equation \cite{ESS99}, namely braided groups \cite{LYZ2000} and skew-braces \cite{GV2017}, see \cite[Section 3]{BGST2023}.  The defining axioms of a post-group have been relaxed by S.~Wang in \cite{W2023}, leading to the notion of weak twisted post-group.\\

We begin this short note with a quick review of the basic properties of post-groups in Section \ref{sect:ib}. A short account of the main results of \cite{BGST2023} is provided in Section \ref{review}. In Section \ref{sect:weak}, we give a short account of S.~Wang's weak twisted post-groups \cite{W2023}. Three new nontrivial examples of weak post-groups (in the sense of S.~Wang, but with the supplementary property that $L^\rhd_e$ is bijective\footnote{which immediately implies $L^\rhd_e=\smop{Id}_G$, see proof of Proposition \ref{wtpg} together with \eqref{action-unit}.}, and without twist) are discussed in Section \ref{examples}. The first example is given by maps from a set $M$ into a group $G$, with the latter acting on $M$ from the right. The second example is the smooth analogue of the first one, namely smooth maps from a smooth manifold $\mathcal{M}$ into a Lie group $G$ right-acting differentiably on $\mathcal{M}$. In Paragraph \ref{MKW-PG}), we show that the post-Lie algebra naturally associated with this weak post-Lie group coincides with the post-Lie algebra of Lie group integrators considered in \cite{HA13}.\\

The last family of examples provides post-groups in the strict sense (Theorem \ref{main-free}), namely free post-groups generated by left-regular diagonal magmas. A magma $(M,\rhd)$ is left-regular whenever the left multiplication operators $L_x^\rhd=x\rhd -$ are bijective. A left-regular magma is called \textsl{diagonal} in this work when the application $x\mapsto (L_x^\rhd)^{-1}(x)$ is moreover bijective from $M$ onto $M$. Our two main results in this last section can be summarized as follows:
\begin{itemize}
\item We prove (Theorem \ref{main-free}) that the free group $F_M$ generated by $M$ is a post-group, and that it is free in the following sense: for any post-group $(G,.,\rhd)$, and for any magma morphism $\varphi:(M,\rhd)\to (G,\rhd)$, the unique group morphism $\Phi:F_M\to G$ extending $\varphi$ is a morphism of post-groups. 

\item We construct (Proposition \ref{iso}) a group isomorphism $\mathcal K:(F_M,*)\to(F_M,.)$ between the two group structures of a free post-group, reminiscent of Aleksei V.~Gavrilov's $K$-map between the two Lie algebra structures of a free post-Lie algebra \cite{Gavrilov2007, Gavrilov2012}, see also \cite{AEMM2022,Foissy2018}.\\
\end{itemize}

\noindent \textbf{Acknowledgements :} We thank Pierre Catoire for his insightful remarks on a previous version of this work. M.~J.~H.~Al-Kaabi is funded by the Iraqi Ministry of Higher Education and Scientific Research. He also thanks Mustansiriyah University, College of Science, Mathematics Department for support. K.~Ebrahimi-Fard is supported by the Research Council of Norway through project 302831 “Computational Dynamics and Stochastics on Manifolds” (CODYSMA). He also thanks the Centre for Advanced Study (CAS) in Oslo for support. D.~Manchon acknowledges a support from the grant ANR-20-CE40-0007 \textsl{Combinatoire Alg\'ebrique, Renormalisation, Probabilit\'es Libres et Op\'erades}.

\section{Basic definitions and properties}
\label{sect:ib}

\begin{defn} \cite{BGST2023}
A \textbf{post-group} $(G,.,\rhd)$ is a group $(G,.)$ endowed with a binary map $\rhd: G\times G\to G$ such that $L_a^\rhd (-):=a\rhd -$ is a group automorphism, and such that the following identity holds:
\begin{equation}
\label{GL}
	(a*b)\rhd c=a\rhd(b\rhd c),
\end{equation}
for any $a,b,c\in G$, where 
\begin{equation}
\label{GLgroup}
	a*b := a.(a\rhd b).
\end{equation}
\end{defn}

\noindent The inverse of any $a\in G$ will be denoted by $a^{.-1}$.  The following statement identifies the product \eqref{GLgroup} as a group law.

\begin{prop}\label{groupGL}\cite[Theorem 2.4]{BGST2023}
The binary map $*:G\times G\to G$ defined by $a*b:=a.(a\rhd b)$ provides $G$ with a second group structure. Both groups $(G,.)$ and $(G,*)$ share the same unit $e$, and the inverse for $*$ is given by
\begin{equation}\label{inverse-GL}
	a^{*-1}=(L_a^\rhd)^{-1}a^{.-1}.
\end{equation}
The binary map $*$ is an action of the group $(G,*)$ on the group $(G,.)$ by automorphisms.
\end{prop}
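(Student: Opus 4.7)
The plan is to verify the group axioms for $*$ in the order: unit, associativity, inverses, and then deduce the action statement almost for free.

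First I would identify $e$ as a two-sided unit for $*$. Since $L^\rhd_a$ is by definition a $(G,.)$-automorphism it sends $e$ to $e$, so $a\rhd e=e$ and hence $a*e=a.(a\rhd e)=a$. For the other side I would apply the defining identity \eqref{GL} with $a=b=e$ to get $L^\rhd_e\circ L^\rhd_e=L^\rhd_{e*e}=L^\rhd_{e.(e\rhd e)}=L^\rhd_e$, so by bijectivity of $L^\rhd_e$ we obtain $L^\rhd_e=\mathrm{id}$; in particular $e*a=e\rhd a=a$.

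Next I would prove associativity. Unfolding the definitions,
\[
(a*b)*c=(a*b).\bigl((a*b)\rhd c\bigr)=a.(a\rhd b).\bigl(a\rhd(b\rhd c)\bigr),
\]
where \eqref{GL} was used at the last step. Using that $L^\rhd_a$ is a homomorphism of $(G,.)$, the last two factors combine as $(a\rhd b).(a\rhd(b\rhd c))=a\rhd\bigl(b.(b\rhd c)\bigr)=a\rhd(b*c)$, so that $(a*b)*c=a.\bigl(a\rhd(b*c)\bigr)=a*(b*c)$.

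For inverses, set $b:=(L^\rhd_a)^{-1}(a^{.-1})$. Then $a*b=a.L^\rhd_a(b)=a.a^{.-1}=e$, so $b$ is a right $*$-inverse of $a$. To promote this to a two-sided inverse I would use the standard monoid argument: pick a right $*$-inverse $c$ of $b$, then $a=a*e=a*(b*c)=(a*b)*c=e*c=c$, whence $b*a=b*c=e$. This confirms formula \eqref{inverse-GL}.

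Finally, for the action statement, the map $a\mapsto L^\rhd_a$ sends each $a$ to a $(G,.)$-automorphism by definition of a post-group, and identity \eqref{GL} reads exactly $L^\rhd_{a*b}=L^\rhd_a\circ L^\rhd_b$, so this is a group morphism from $(G,*)$ into $\mathrm{Aut}(G,.)$. The only delicate point in the whole argument is the passage from a right inverse to a two-sided inverse for $*$; everything else is a direct unfolding of \eqref{GL} together with the automorphism property of $L^\rhd_a$.
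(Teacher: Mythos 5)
Your proof is correct and follows essentially the same route as the paper: the same unfolding of \eqref{GL} for associativity, the same candidate $(L_a^\rhd)^{-1}(a^{.-1})$ for the inverse, and the same observation that \eqref{GL} is precisely the morphism property of $a\mapsto L_a^\rhd$. You are in fact slightly more complete than the paper, which asserts $e\rhd a=a$ without derivation and leaves the right-inverse-to-two-sided-inverse step as ``a standard argument,'' both of which you spell out correctly.
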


\begin{proof}
The proof can be found in \cite{BGST2023}, we reproduce it for the reader's convenience: associativity of $*$ comes from the associativity of the group law on $G$ by following direct computation:
\begin{eqnarray*}
	(a*b)*c
	&=&\big(a.(a\rhd b)\big)*c\\
	&=&\big(a.(a\rhd b)\big).\Big(\big(a.(a\rhd b)\big)\rhd c\Big)\\
	&=&\big(a.(a\rhd b)\big).\big(a\rhd(b\rhd c)\big)
\end{eqnarray*}
whereas
\begin{eqnarray*}
	a*(b*c)
	&=&a*\big(b.(b\rhd c)\big)\\
	&=&a.\Big(a\rhd\big(b.(b\rhd c)\big)\Big)\\
	&=&a.\Big((a\rhd b).\big(a\rhd(b\rhd c)\big)\Big).
\end{eqnarray*}
Let $e$ be the unit of the group $(G,.)$. From $a\rhd e=e$ for any $a\in G$ we easily deduce that 
$$
	a*e=a.(a\rhd e)=a.
$$ 
On the other hand, we have $e\rhd(e\rhd a)=\big(e.(e\rhd e)\big)\rhd a=e\rhd a$, in other words $(L_e^\rhd)^2=L_e^\rhd$. From the bijectivity of $L_e^\rhd$ we deduce $L_e^\rhd=\mop{Id}_G$, hence
\begin{equation}
\label{action-unit}
	e\rhd a=a.
\end{equation}
This implies $e*a=e.(e\rhd a)=a$ and $a*e=a.(a\rhd e)=a$, hence $e$ is the unit for the new product $*$. Finally, defining the element $b:=(L_a^\rhd)^{-1}a^{.-1}$, for $a \in G$, we check quickly that
\begin{eqnarray*}
	a*b
	&=&a.(a\rhd b)\\
	&=&a.\big(L_a^\rhd(L_a^\rhd)^{-1}a^{.-1}\big)\\
	&=&a.a^{.-1}=e.
\end{eqnarray*}
Any element in $G$ therefore admits a right-inverse with respect to the product $*$. A standard argument shows that the right-inverse is also a left-inverse, and that the inverse thus obtained is unique. 
\end{proof}

\begin{defn}\label{def:gl}
Let $(G,.,\rhd)$ be a post-group. The associated \textbf{Grossman--Larson group}\footnote{It is called the subjacent group in \cite{BGST2023}. It makes however sense to take into account that any post-group $(G,.,\rhd)$ gives rise to two subjacent groups, namely $(G,.)$ and $(G,*)$. The terminology we have chosen refers to the Grossman--Larson bracket in a post-Lie algebra \cite{ELM2015}, which itself refers to the Grossman--Larson product of rooted forests \cite{GL1989, MKW2008}.} $(G,*)$ has the group law $a*b=a.(a\rhd b)$, for any $a,b \in G$.
\end{defn}

\begin{defn}\cite[Definition 2.5]{BGST2023}
A \textbf{pre-group} is a post-group $(G,.,\rhd)$ where the group $(G,.)$ is Abelian.
\end{defn}

This terminology should remind the reader of the well-known fact that a pre-Lie algebra can be seen as a post-Lie algebra with an Abelian Lie bracket. Grossman--Larson groups associated to pre-groups are not Abelian in general. Pre-groups naturally associated with free pre-Lie algebras appear as early as 1981 under the terminology ``group of formal flows'' \cite{AG1981, Manchon2011, S2022}.

\begin{defn}
Let $(G,.,\rhd)$ be a post-group. The \textbf{opposite post-group} is given by $(G,\bcdot,\blacktriangleright)$ with $a\bcdot b:=b.a$ and $a \blacktriangleright b:=a.(a\rhd b).a^{.-1}$, for any $a,b\in G$.
\end{defn}

\noindent We leave it to the reader to check that $(G,\bcdot,\blacktriangleright)$ is indeed a post-group, sharing with $(G,.,\rhd)$ the same Grossman--Larson product, namely
\begin{equation*}
	a*b
	=a.(a\rhd b)
	=a\bcdot(a\blacktriangleright b).
\end{equation*}

\section{Quick review of the main results of reference \cite{BGST2023}}\label{review}

We offer here a quick guided tour of the article \cite{BGST2023} by C.~Bai, L.~Guo, Y.~Sheng and R.~Tang, where the reader will find the detailed statements and proofs.

\subsection{Post-groups, braided groups and skew-braces}

The main structural result of \cite{BGST2023} is the equivalence between three different notions, i.e., that of a post-group, a braided group and a skew brace (Section 3 therein). Braided groups appeared in 2000 in an influential article on set-theoretical solutions of the Yang--Baxter equation by J.-H.~Lu, M.~Yan and Y.-C.~Zhu \cite{LYZ2000}. Another construction related to Yang--Baxter equations, namely skew-braces, appeared more recently in the 2017 article by L.~Guarnieri and L.~Vendramin \cite{GV2017}, following the introduction of braces by W.~Rump\footnote{These braces should not be confused with their homonyms related to pre-Lie algebras and operads, see e.g.~\cite{C2002, GO2008,MQS2020}. Interestingly enough, this conflict of terminology identifies pre-Lie algebras with strongly nilpotent braces in the first sense \cite{S2022}, and with symmetric braces in the second sense \cite{GO2008}.} \cite{R2007}.\\

Let $\mathcal C$ be a \textbf{braided monoidal set category}, i.e.~a full subcategory of the category of sets, stable by cartesian product and endowed with a braiding $\sigma$, that is, a collection of bijective maps $\sigma_{XY}: X \times Y \to Y \times X$ indexed by ordered pairs $(X,Y)$ of objects of $\mathcal C$, subject to 
\begin{itemize}
\item functoriality: for any pair of set maps $f:X\to X'$ and $g:Y\to Y'$, the equality
$$
	\sigma_{X' Y'}\circ (f\times g)=(g\times f)\circ \sigma_{XY}
$$
holds.
\item compatibility with the cartesian product:
\begin{equation}
\label{double-tresse}
	\sigma_{X\times X',\,Y\times Y'}
	=(\mop{Id}_Y\times\sigma_{XY'}\times \mop{Id}_{X'})\circ(\sigma_{XY}\times\sigma_{X'Y'})
	\circ (\mop{Id}_X\times\sigma_{X'Y}\times \mop{Id}_{Y'}),
\end{equation}
\item the hexagon equation
\begin{equation}
\label{hexagone}
	(\sigma_{YZ}\times \mop{Id}_X)\circ (\mop{Id}_Y\times  \sigma_{XZ})\circ (\sigma_{XY}\times \mop{Id}_Z)
	=(\mop{Id}_Z\times  \sigma_{XY})\circ (\sigma_{XZ}\times \mop{Id}_Y)\circ (\mop{Id}_X\times  \sigma_{YZ}).
\end{equation}
\end{itemize}
In particular, any object $S$ of $\mathcal C$ is a braided set, the braiding map $\sigma_{SS}: S \times S \to S \times S$ being a bijection satisfying the braid equation
\begin{equation}
\label{braid}
	(\sigma\times \mop{Id}_S)\circ (\mop{Id}_S\times  \sigma)\circ (\sigma\times \mop{Id}_S)
	=(\mop{Id}_S\times  \sigma)\circ (\sigma\times \mop{Id}_S)\circ (\mop{Id}_S\times  \sigma),
\end{equation}
where we have abbreviated $\sigma_{SS}$ by $\sigma$. The category of sets itself is braided with the flip maps $P_{XY}:X\times Y\to Y\times X$ defined by $P_{XY}(a,b)=(b,a)$ (trivial braiding). A map $\sigma:S\times S\to S\times S$ verifies \eqref{braid} if and only if the map $R=P_{SS}\circ\sigma$ is a solution of the set-theoretical Yang--Baxter equation\footnote{The term "Yang--Baxter equation" is sometimes used for the braid equation \eqref{braid} in the literature, thus bringing some confusion. We adopt here the conventions of \cite{LYZ2000}.}
\begin{equation}
\label{yb}
	R_{12}\circ\ R_{13}\circ R_{23} = R_{23}\circ\ R_{13}\circ R_{12},
\end{equation}
where, as usual, $R_{12}= R \times \mop{Id}_S$, $R_{23}=\mop{Id}_S\times R$, and $R_{13}=(\mop{Id}_S\times\tau)\circ(R\times \mop{Id}_S)\circ (\mop{Id}_S\times\tau)$. The trivial braiding satisfies the additional symmetry property $P_{YX}\circ P_{XY}=\mop{Id}_{X\times Y}$ for any sets $X,Y$.\\

\begin{defn}\label{BG}\cite{LYZ2000}
A \textbf{braided group} is a commutative group in the braided monoidal set category generated by it. To be concrete, it is a pair $(G,\sigma)$ where $G$ is a group and $\sigma: G \times G \to G \times G$ is a bijection such that
\begin{itemize}
\item $\sigma\circ(m\times m)=(m\times m)\circ \wt\sigma$, where $ \wt\sigma=\sigma_{G\times G,\,G\times G}:G^4\to G^4$ is given by \eqref{double-tresse},\label{BG-one} and where $m:G\times G\to G$ is the group multiplication,
\item $m\circ\sigma=m$.\label{BG-two}
\end{itemize}
\end{defn}

\noindent In a braided group $(G,\sigma)$, it turns out that \cite[Theorem 1 \& Theorem 2]{LYZ2000}
\begin{itemize}
\item the map $\sigma$ in a braided group verifies the braid equation \eqref{braid},
\item the two maps $\rightharpoonup,\leftharpoonup:G\times G\to G$ defined by
\begin{equation}
\label{actions}
	\sigma(g,h)=(g\rightharpoonup h,\, g\leftharpoonup h)
\end{equation}
are respectively a left action and a right action of the group $G$ on itself. Conversely, if a left action $\rightharpoonup$ and a right action $\leftharpoonup$ together fulfil the compatibility condition $gh=(g\rightharpoonup h).(g\leftharpoonup h)$ for any $g,h\in G$, then $(G,\sigma)$ is a braided group, with the braiding $\sigma$ given by \eqref{actions}.
\end{itemize}

\begin{thm}\cite[Proposition 3.13 \& Proposition 3.17]{BGST2023}
For any post-group $(G,.,\rhd)$, the Grossman--Larson group $(G,*)$ is a braided group, with braiding $\sigma:G\times G\to G\times G$ given by
\begin{equation}
\label{post2braided}
	\sigma(g,h)
	:=\big(g\rhd h,\,(g\rhd h)^{*-1}*g*h\big).
\end{equation}
Conversely, any braided group $(G,\sigma)$ with product denoted by $*$ gives rise to a post-group $(G,.,\rhd)$ with
\begin{equation}
\label{braided2post}
	g\rhd h:=g\rightharpoonup h \hbox{ and } g.h
	:=g*(g^{*-1}\rightharpoonup h),
\end{equation}
where we have used the notation $\sigma(g,h):=(g\rightharpoonup h,\,g \leftharpoonup h)$. Both correspondences are mutually inverse.
\end{thm}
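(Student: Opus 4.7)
The plan is to use the Lu--Yan--Zhu characterization of braided groups recalled just above the statement: a pair $(G,\sigma)$ with group operation $*$ is a braided group if and only if the two maps $\rightharpoonup,\leftharpoonup: G\times G\to G$ extracted from $\sigma(g,h)=(g\rightharpoonup h,\,g\leftharpoonup h)$ are, respectively, a left and a right action of $(G,*)$ on itself satisfying the compatibility $g*h=(g\rightharpoonup h)*(g\leftharpoonup h)$. This reduces each implication to the verification of three algebraic properties.

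For the forward direction, starting from a post-group $(G,.,\rhd)$, define $g\rightharpoonup h:=g\rhd h$ and $g\leftharpoonup h:=(g\rhd h)^{*-1}*g*h$, so that the putative braiding is exactly \eqref{post2braided} and the compatibility is tautological. That $\rightharpoonup$ is a left action of $(G,*)$ on itself is precisely the combination of \eqref{action-unit} with the post-group axiom \eqref{GL}, the latter reading $(g*h)\rhd k=g\rhd(h\rhd k)$. The remaining step is to show that $\leftharpoonup$ is a right action: the unit axiom $e\leftharpoonup h=h$ is immediate, while $(g\leftharpoonup h)\leftharpoonup k=g\leftharpoonup(h*k)$ should follow by expanding both sides and repeatedly applying the left-action property of $\rhd$ together with the fact that $L_g^\rhd$ is a group automorphism of $(G,.)$. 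At this point the Lu--Yan--Zhu characterization yields the braided group structure, and, as a bonus, automatic bijectivity of $\sigma$.

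For the converse, given a braided group $(G,*,\sigma)$, set $g\rhd h:=g\rightharpoonup h$ and $g.h:=g*(g^{*-1}\rightharpoonup h)$. Associativity of $.$, together with the existence and uniqueness of inverses, follows from the left-action axioms of $\rightharpoonup$ applied in $(G,*)$; the neutral element of $*$ remains a neutral element for $.$. The map $L_g^\rhd$ is a group automorphism of $(G,.)$ because $\rightharpoonup$ acts by automorphisms on $(G,*)$ and because the $.$-product is itself built from $*$ and $\rightharpoonup$. The post-group axiom \eqref{GL} is then nothing but the left-action property of $\rightharpoonup$. Finally one checks that the induced Grossman--Larson product recovers $*$:
\begin{equation*}
	a.(a\rhd b) = a*\bigl(a^{*-1}\rightharpoonup(a\rightharpoonup b)\bigr)=a*\bigl((a^{*-1}*a)\rightharpoonup b\bigr)=a*b.
\end{equation*}

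To close the theorem one has to verify that the two constructions are mutually inverse. Running post-group $\to$ braided group $\to$ post-group leaves $\rhd$ obviously unchanged, and the recovered dot product equals $g*(g^{*-1}\rhd h)=g.\bigl(g\rhd(g^{*-1}\rhd h)\bigr)=g.\bigl((g*g^{*-1})\rhd h\bigr)=g.h$; the reverse composition is treated in the same way. The principal obstacle is the bookkeeping between the two group laws $.$ and $*$ on $G$: one must track how $L_g^\rhd$, the inversion formula \eqref{inverse-GL}, and the axiom \eqref{GL} interact whenever $*$-products and $.$-products are mixed, in particular while checking that the second component of $\sigma$ yields a genuine right action.
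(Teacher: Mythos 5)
First, a caveat: this paper does not prove the statement itself --- it is quoted verbatim from \cite[Propositions 3.13 \& 3.17]{BGST2023} inside a section explicitly labelled as a review --- so your attempt can only be measured against the cited source, not against an in-paper argument.

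Your forward direction is essentially correct. Reducing to the Lu--Yan--Zhu characterisation (a left action $\rightharpoonup$ and a right action $\leftharpoonup$ of $(G,*)$ on itself with $g*h=(g\rightharpoonup h)*(g\leftharpoonup h)$) is legitimate, since the paper recalls that equivalence; the compatibility is tautological for \eqref{post2braided}, and the left-action property is exactly \eqref{GL} together with \eqref{action-unit}. The right-action verification, which you merely assert, does go through: setting $u:=g\leftharpoonup h=(g\rhd h)^{*-1}*g*h$, the identity $(g\leftharpoonup h)\leftharpoonup k=g\leftharpoonup(h*k)$ reduces to $(g\rhd h)*(u\rhd k)=g\rhd(h*k)$, and both sides equal $(g\rhd h).\big((g*h)\rhd k\big)$ by the automorphism property of $L_g^\rhd$ and \eqref{GL}. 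You should spell this out, but the method you indicate is the right one.

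The converse direction, however, rests on a false intermediate claim: in a braided group, $\rightharpoonup$ is \emph{not} an action by automorphisms of $(G,*)$. What is true, and what you actually need, is the matched-pair identity
\begin{equation*}
	g\rightharpoonup(h*k)=(g\rightharpoonup h)*\big((g\leftharpoonup h)\rightharpoonup k\big),
\end{equation*}
a consequence of the braided-group axiom $\sigma\circ(m\times m)=(m\times m)\circ\wt\sigma$ combined with $m\circ\sigma=m$ --- it does not follow from the left-action property alone. (Already in the forward direction one sees $g\rhd(h*k)=(g\rhd h).\big((g*h)\rhd k\big)$, which differs from $(g\rhd h)*(g\rhd k)$ in general.) Both the associativity of $g.h:=g*(g^{*-1}\rightharpoonup h)$ and the fact that $L_g^\rhd$ is an automorphism of the new group $(G,.)$ hinge on this identity together with the compatibility relation in the form $g\leftharpoonup h=(g\rightharpoonup h)^{*-1}*g*h$; for instance, the automorphism property reduces to the equality $(g\leftharpoonup h)*h^{*-1}=(g\rightharpoonup h)^{*-1}*g$, which is exactly that relation. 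As written, your argument for the converse does not close; once the matched-pair identity is stated and proved from the braided-group axioms, the remaining verifications --- including your check that the two constructions are mutually inverse, which is fine --- become routine.
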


\begin{prop}\label{braidop}
Let $(G,.,\rhd)$ be a post-group, and let $\sigma:G\times G\to G\times G$ the corresponding braiding given by \eqref{post2braided}. The braiding corresponding to the opposite post-group $(G,\bcdot,\blacktriangleright)$ is $\sigma^{-1}$.
\end{prop}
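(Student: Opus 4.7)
The plan is to verify directly that $\sigma \circ \sigma' = \mrm{Id}_{G\times G}$, where $\sigma'$ denotes the braiding associated to the opposite post-group via \eqref{post2braided}, i.e.,
$$\sigma'(u,v)=\big(u\blacktriangleright v,\,(u\blacktriangleright v)^{*-1}*u*v\big).$$
Since $\sigma$ is a bijection (as the braiding of a braided group), this will be enough to conclude $\sigma'=\sigma^{-1}$.

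A key structural observation simplifies the task: both $\sigma$ and $\sigma'$ satisfy the second braided-group axiom $m \circ \sigma = m = m \circ \sigma'$ relative to the common Grossman--Larson product $*$. Consequently, if $\sigma'(u,v)=(g,h)$, then automatically $g*h=u*v$, and therefore the second component of $\sigma(g,h)$ equals $(g\rhd h)^{*-1}*g*h=(g\rhd h)^{*-1}*u*v$. Hence the only thing I really need to check is the equality of the first components, namely
$$g\rhd h=u, \qquad \hbox{where } g:=u\blacktriangleright v \hbox{ and } g*h=u*v.$$
Indeed, once this is established, the second component of $\sigma(g,h)$ becomes $u^{*-1}*u*v=v$, as required.

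The core step is therefore the following direct computation. From $g*h=u*v$, unfolding via $\eqref{GLgroup}$ gives $g.(g\rhd h)=u.(u\rhd v)$, hence $g\rhd h=g^{.-1}.u.(u\rhd v)$. Substituting $g=u\blacktriangleright v=u.(u\rhd v).u^{.-1}$ yields $g^{.-1}=u.(u\rhd v)^{.-1}.u^{.-1}$, so
$$g\rhd h=u.(u\rhd v)^{.-1}.u^{.-1}.u.(u\rhd v)=u,$$
as desired. Thus $\sigma\circ\sigma'=\mrm{Id}_{G\times G}$, and bijectivity of $\sigma$ yields $\sigma'=\sigma^{-1}$.

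The main (and only) obstacle is careful bookkeeping between the two group laws $.$ and $*$ on $G$ and the two corresponding inverses; the cancellation above is the entire content of the proposition. Alternatively one could verify $\sigma'\circ\sigma=\mrm{Id}$ by the symmetric computation, or invoke that the opposite of the opposite post-group recovers the original one, so that the identity $\sigma\circ\sigma'=\mrm{Id}$ applied to $(G,\bcdot,\blacktriangleright)$ gives $\sigma'\circ\sigma=\mrm{Id}$.
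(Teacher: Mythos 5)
Your proof is correct, and it takes a cleaner route than the paper's. The paper verifies the composition in the other order, $\sigma'\circ\sigma=\mrm{Id}$: writing $\sigma(g,h)=(H,G)$, it must show that the first component $K=H.(H\rhd G).H^{.-1}$ of $\sigma'(H,G)$ equals $g$, which requires a fairly long chain of manipulations (expanding $*$ via \eqref{GLgroup}, using the automorphism property of $L^\rhd_{g\rhd h}$ and the identity \eqref{GL}) before the conjugation cancels. Both arguments share the same reduction — since $m_*\circ\sigma=m_*\circ\sigma'=m_*$ is immediate from the shape of \eqref{post2braided}, only the first component needs checking — but in your direction $\sigma\circ\sigma'$ the first component of the outer map is simply $g\rhd h$ with no conjugation, and the substitution $g=u.(u\rhd v).u^{.-1}$ makes everything collapse in two lines inside the dot-group. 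Two small points to make explicit if you write this up: (i) the $*$ appearing in the formula \eqref{post2braided} for $\sigma'$ is a priori the Grossman--Larson product of the opposite post-group, so you should invoke the fact (stated in the paper right after the definition of the opposite post-group) that $(G,\bcdot,\blacktriangleright)$ and $(G,.,\rhd)$ share the same Grossman--Larson product; (ii) the final step needs $\sigma$ to be a bijection, which is guaranteed because it is the braiding of the braided group $(G,*)$ — or, as you note, one can instead apply your identity to the opposite of the opposite post-group, which is the original one, to obtain the other composition $\sigma'\circ\sigma=\mrm{Id}$ directly.
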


\begin{proof}
It is immediate from Definition \ref{BG} that, for any braided group $(G,\sigma)$, the pair $(G,\sigma^{-1})$ is also a braided group. Now let $(G,.,\rhd)$ be a post-group, let $\sigma$ be the corresponding braiding map, and let $\sigma'$ be the braiding map of the opposite post-group $(G,\bcdot,\blacktriangleright)$. From \eqref{post2braided}, we have $\sigma(g,h)=(H,G)$ with $H=g\rhd h$ and $G=(g\rhd h)^{*-1}*g*h$. Now we have
\begin{eqnarray*}
	\sigma'\circ\sigma(g,h)
	&=&\sigma'(H,G)\\
	&=&\big(H\blacktriangleright G, (H\blacktriangleright G)^{*-1}*H*G\big)\\
	&=&\Big(H.(H\rhd G).H^{.-1},\,\big(H.(H\rhd G).H^{.-1}\big)^{*-1}*H*G\Big)\\
	&=&(K,K^{*-1}*H*G),
\end{eqnarray*}
with $K:=H.(H\rhd G).H^{.-1}$. We therefore have
\begin{eqnarray*}
	K
	&=&(g\rhd h).\Big((g\rhd h)\rhd\big((g\rhd h)^{*-1}*g*h\big)\Big).(g\rhd h)^{.-1}\\
	&=&(g\rhd h).\Big\{(g\rhd h)\rhd\Big((g\rhd h)^{*-1}*\big(g.(g\rhd h)\big)\Big)\Big\}.(g\rhd h)^{.-1}\\
	&=&(g\rhd h).\bigg\{(g\rhd h)\rhd\Big\{(g\rhd h)^{*-1}.\Big((g\rhd h)^{*-1}\rhd\big(g.(g\rhd h)\big)\Big)\Big\}\bigg\}.(g\rhd h)^{.-1}\\
	&=&(g\rhd h).\big((g\rhd h)\rhd (g\rhd h)^{*-1}\big).\Big\{(g\rhd h)\rhd\Big((g\rhd h)^{*-1}\rhd\big(g.(g\rhd h)\Big)\Big\}.(g\rhd h)^{.-1}\\
	&=&(g\rhd h).\big((g\rhd h)\rhd (g\rhd h)^{*-1}\big).\big(g.(g\rhd h)\big).(g\rhd h)^{.-1}\\
	&=&(g\rhd h).(g\rhd h)^{.-1}.g\\
	&=&g,
\end{eqnarray*}
so that $K^{*-1}*H*G=g^{*-1}*(g\rhd h)*(g\rhd h)^{*-1}*g*h=h$. Therefore we have $\sigma' \circ \sigma(g,h) = (g,h)$. Both bijective maps $\sigma$ and $\sigma'$ are mutually inverse, which proves Proposition \ref{braidop}.
\end{proof}

\begin{cor}\cite[Remark 3.15]{BGST2023}
The braiding map corresponding to a pre-group is involutive.
\end{cor}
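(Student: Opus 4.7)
The plan is to deduce this corollary directly from Proposition \ref{braidop} by observing that a pre-group coincides with its own opposite. Specifically, if $(G,.,\rhd)$ is a pre-group, then by definition the group $(G,.)$ is abelian. I would then inspect each of the two operations defining the opposite post-group $(G,\bcdot,\blacktriangleright)$:
\begin{itemize}
\item $a\bcdot b=b.a=a.b$, since $(G,.)$ is commutative;
\item $a\blacktriangleright b=a.(a\rhd b).a^{.-1}=a\rhd b$, again using commutativity of $(G,.)$.
\end{itemize}
Hence the opposite post-group is literally equal (not merely isomorphic) to the original pre-group $(G,.,\rhd)$.

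Now Proposition \ref{braidop} tells us that the braiding associated with the opposite post-group is $\sigma^{-1}$, where $\sigma$ is the braiding associated with $(G,.,\rhd)$ via \eqref{post2braided}. Since the opposite coincides with the original, the braiding assignment must produce the same map in both cases, i.e.\ $\sigma^{-1}=\sigma$. Thus $\sigma\circ\sigma=\mop{Id}_{G\times G}$, which is the claimed involutivity.

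No step here is genuinely hard: everything rests on the already-proved Proposition \ref{braidop} together with the elementary observation that commutativity of $.$ collapses both opposite operations to the original ones. If one wished to avoid invoking Proposition \ref{braidop}, an alternative route would be a direct computation from \eqref{post2braided}: writing $\sigma(g,h)=(H,K)$ with $H:=g\rhd h$ and $K:=H^{*-1}*g*h$, one would check by hand that $H\rhd K=g$ and $(H\rhd K)^{*-1}*H*K=h$, using commutativity of $.$ at the step where $a\blacktriangleright b$ simplifies to $a\rhd b$. But since Proposition \ref{braidop} has just been established, the opposite-post-group argument gives the cleanest proof.
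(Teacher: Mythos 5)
Your proof is correct and is essentially the paper's own argument: the paper also deduces the corollary from Proposition \ref{braidop} via the observation that a pre-group equals its opposite. You merely spell out in more detail the (easy) verification that commutativity of $.$ collapses $\bcdot$ and $\blacktriangleright$ to $.$ and $\rhd$.
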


\begin{proof}
This is a direct consequence of the fact that any pre-group is equal to its opposite.
\end{proof}

\begin{defn}\cite{BGST2023,GV2017}
A \textbf{skew-left brace} is a set $G$ endowed with two group structures $(G,.)$ and $(G,*)$ such that
\begin{equation}
\label{sb}
	g*(h.k)=(g*h).g^{.-1}.(g*k)
\end{equation}
for any $g,h,k\in G$.
\end{defn}

Any post-group $(G,.,\rhd)$ gives rise to the left-skew brace $(G,.,*)$ where $*$ is the Grossman--Larson product \cite[Proposition 3.22]{BGST2023}. Conversely, any left skew-brace gives rise to the post-group $(G,.,\rhd)$ with $g\rhd h:=g^{.-1}.(g*h)$ for any $g,h\in G$. Both correspondences are mutually inverse \cite[Proposition 3.24]{BGST2023}. To complete the picture, the opposite of a skew-brace $(G,.,*)$ will be defined as the skew-brace $(G,\bcdot,*)$ where, as above, $a\bcdot b:=b.a$ for any $a,b\in G$. A left skew-brace where the dot-group product is commutative is a left brace: see \cite{R2007}, or \cite{S2022} for a recent, purely algebraic account.\\

We remark that the recent notions of both left-skew brace and post-group, therefore appear to be reformulations of the older notion of braided group developed by J.-H.~Lu, M.~Yan and Y.~Zhu in \cite{LYZ2000}, shedding new light to it.

\subsection{Post-Lie groups and post-Lie algebras}

\begin{defn}\cite[Definition 4.2]{BGST2023}
A \textbf{post-Lie group} is a post-group $(G,.,\rhd)$ where $G$ is a smooth manifold, and where both operations are smooth maps from $G\times G$ to $G$.
\end{defn}

\begin{defn}\cite{BV2007} A post-Lie algebra (over some field $\mathbf k$) is a triple $(\mathfrak g, [-,-],\rhd)$ where $(\mathfrak g, [-,-])$ is a Lie algebra, and $\rhd: \mathfrak g \times \mathfrak g$ is a bilinear map such that, for any $X,Y,Z \in \mathfrak g$,
\begin{itemize}
	\item $X\rhd[Y,Z]=[X\rhd Y,Z]+[Y,X\rhd Z]$,
	\item $[X,Y]\rhd Z=X\rhd(Y\rhd Z)-(X\rhd Y)\rhd Z-Y\rhd(X\rhd Z)+(Y\rhd X)\rhd Z$.
\end{itemize}
\end{defn}

As key property of post-Lie algebra we recall from \cite{ELM2015} that $(\mathfrak g,\lgl-,-\rgl)$ is the corresponding \textbf{Grossman--Larson Lie algebra}\footnote{called subjacent Lie algebra in \cite{BGST2023}.} given by the Grossman--Larson bracket defined by
\begin{equation}
\label{GLLiebracket}
	\lgl X,Y \rgl:=[X,Y]+X\rhd Y-Y\rhd X,
\end{equation}
for any $X,Y\in\mathfrak g$. The older notion of pre-Lie algebra \cite{G1963,V1963} follows from that of post-Lie algebra in the case of the latter having an Abelian Lie bracket. The corresponding Grossman--Larson bracket is thus given by the anti-symmetrization of the pre-Lie product, that is, pre-Lie algebras are Lie admissible. We remark that in \cite{AG1981} pre-Lie algebras are called chronological algebras. See \cite{Burde2006,Manchon2011} for detailed reviews.\\

\begin{defn}
Let $(\mathfrak g,[-,-],\rhd)$ be a post-Lie algebra. Defining $[X,Y]^{\smop{op}}:=-[X,Y]$ and $X\blacktriangleright Y:=X\rhd Y+[X,Y]$, we have that $(\mathfrak g, [-,-]^{\smop{op}},\blacktriangleright)$ is a post-Lie algebra, which we call the \textbf{opposite post-Lie algebra}.
\end{defn}

\noindent Checking the post-Lie algebra axioms for  $(\mathfrak g, [-,-]^{\smop{op}},\blacktriangleright)$ is left to the reader. Both post-Lie algebras share the same Grossman--Larson bracket \cite[Paragraph 2.1]{AEMM2022}.\\

Section 4 of \cite{BGST2023} elucidates the relationship between post-Lie groups and post-Lie algebras. Namely, the Lie algebra $\mathfrak g$ of a post-Lie group $G$ is a post-Lie algebra. The bilinear map $\rhd$ is given by
\begin{equation}
\label{diff}
	X\rhd Y:=\frac{d}{dt}\restr{t=0}\frac{d}{ds}\restr{s=0}\exp(tX)\rhd\exp(sY)
\end{equation}
(see \cite[Theorem 4.3]{BGST2023}). Moreover, the Lie algebra of the Grossman--Larson group $(G,*)$ is the Grossman--Larson Lie algebra $(\mathfrak g,\lgl-,-\rgl)$ \cite[Proposition 4.7]{BGST2023}.

\begin{prop}
Let $G$ be a post-Lie group with post-Lie algebra $\mathfrak g$. The post-Lie algebra of the opposite post-Lie group of $G$ is the opposite post-Lie algebra of $\mathfrak g$.
\end{prop}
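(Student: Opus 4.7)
The plan is to apply the differentiation recipe \eqref{diff} to the opposite post-Lie group $(G,\bcdot,\blacktriangleright)$ and verify that both induced structures at the identity agree with those of the opposite post-Lie algebra of $\mathfrak g$. For the Lie bracket, the underlying Lie group of $(G,\bcdot,\blacktriangleright)$ is $(G,\bcdot)$ with $a\bcdot b=b.a$, i.e.~the opposite Lie group of $(G,.)$. It is classical that, under the canonical identification $T_e(G,\bcdot)=T_eG=\mathfrak g$, the resulting bracket is $-[-,-]$, which is precisely $[-,-]^{\smop{op}}$. Moreover, one-parameter subgroups of $(G,.)$ and of $(G,\bcdot)$ coincide, since they are abelian in both structures, so the exponential map is the same in both groups and there is no ambiguity when writing $\exp(tX)$ in \eqref{diff} applied to the opposite post-Lie group.

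For the post-Lie product, I would compute
\begin{equation*}
	X\blacktriangleright Y
	=\frac{d}{dt}\bigg|_{t=0}\frac{d}{ds}\bigg|_{s=0}\exp(tX)\blacktriangleright\exp(sY),
\end{equation*}
using $a\blacktriangleright b=a.(a\rhd b).a^{.-1}$. Setting $\Phi(t,s):=\exp(tX)\rhd\exp(sY)$, the key observation is that $\Phi(t,0)=e$ because $L_a^\rhd$ is a group automorphism and hence fixes the unit; the first-order expansion in $s$ therefore reads $\Phi(t,s)=e+sA(t)+O(s^2)$ with $A(t):=\partial_s\Phi(t,0)\in\mathfrak g$. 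The two inputs one needs about $A(t)$ are $A(0)=Y$ (from $e\rhd\exp(sY)=\exp(sY)$, cf.~\eqref{action-unit}) and $\partial_t|_{t=0}A(t)=X\rhd Y$, by the very definition \eqref{diff} of the post-Lie product of $\mathfrak g$.

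The final step is to handle the conjugation: the map $g\mapsto \exp(tX).g.\exp(-tX)$ is a smooth self-diffeomorphism of $G$ fixing $e$, and its differential at $e$ is $\mathrm{Ad}_{\exp(tX)}$. Substituting the above expansion and applying $\partial_s|_{s=0}$ gives $\mathrm{Ad}_{\exp(tX)}\bigl(A(t)\bigr)$; a Leibniz rule at $t=0$, using $\partial_t|_{t=0}\mathrm{Ad}_{\exp(tX)}=\mathrm{ad}_X$ and $\mathrm{Ad}_e=\mathrm{Id}$, yields $X\blacktriangleright Y=[X,Y]+X\rhd Y$, exactly the opposite post-Lie product. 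The only subtlety, which I expect to be the main (and really quite mild) obstacle, is bookkeeping around the Lie algebra identification of the opposite group: one must make sure that the canonical tangent space identification at $e$ is used throughout, so that the minus sign in $[-,-]^{\smop{op}}$ arises purely from the bracket (not absorbed into a relabelling of $\mathfrak g$), and that the orders of the $s$- and $t$-expansions are tracked carefully enough for the adjoint action and its derivative to appear with the right signs.
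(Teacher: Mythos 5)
Your proposal is correct and follows essentially the same route as the paper: both expand $a\blacktriangleright b=a.(a\rhd b).a^{.-1}$, take the $s$-derivative at $s=0$ to obtain $\mathrm{Ad}_{\exp(tX)}$ applied to $\partial_s\big|_{s=0}\big(\exp(tX)\rhd\exp(sY)\big)$, and conclude with a Leibniz rule at $t=0$ yielding $[X,Y]+X\rhd Y$. The only cosmetic differences are that the paper invokes the automorphism property to write $\exp(tX)\rhd\exp(sY)=\exp\big(s\,(\exp(tX)\rhd Y)\big)$ where you use a first-order expansion in $s$, and that you spell out the (correct) bookkeeping about the opposite group's Lie algebra and exponential map which the paper dismisses as well-known.
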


\begin{proof}
It is well-known (and easily checked) that the Lie algebra of the opposite group is deduced from $\mathfrak g$ by changing the sign of the Lie bracket. Recalling the traditional notations
\[
	\mop{Ad}_g Y=\frac d{ds}\restr{t=0}g.\exp(sY).g^{-1},
\] 
and
$$
\mop{ad}_XY=\frac {d}{dt}\restr{t=0}\frac d{ds}\restr{s=0}\exp(tX).\exp(sY).\exp(-tX)=[X,Y],
$$
we now compute:
\begin{eqnarray*}
	\frac d{dt}\restr{t=0}\frac d{ds}\restr{s=0}\exp (tX)\blacktriangleright \exp (sY) 
	&=& \frac d{dt}\restr{t=0}\frac d{ds}\restr{s=0}\exp (tX).\big(\exp (tX)\rhd \exp (sY)\big).\exp (-tX)\\
	&=& \frac d{dt}\restr{t=0}\frac d{ds}\restr{s=0}\exp (tX).\exp s\big(\exp (tX)\rhd Y\big).\exp (-tX)\\
	&=&\frac d{dt}\restr{t=0}\mop{Ad}_{\exp(tX)}.\big(\exp (tX)\rhd Y\big)\\
	&=&(\mop{ad}_XX+L_X^\rhd)(Y)\\
	&=&[X,Y]+X\rhd Y=X\blacktriangleright Y,
\end{eqnarray*}
where, for any $g\in G$ and $Y\in\mathfrak g$, the notation $g\rhd Y$ stands for $\displaystyle \frac d{ds}\restr{s=0}\big(g\rhd\exp(sY)\big)$.
\end{proof}

\subsection{Post-Hopf algebras vs.~$D$-algebras}
\label{posthopf}

$D$-algebras were introduced by H.~Munthe-Kaas and W.~Wright in \cite[Definition 3]{MKW2008} -- independently of the introduction of post-Lie algebras by B.~Vallette \cite{BV2007}. 

\begin{defn} \cite{AEMM2022,MKW2008}
\label{def:Dalgebra}
The triple $(D,.,\rhd)$ consists of a unital associative algebra $(D,.)$ with product~$m_D(u \otimes v)= u.v$ and unit $\mathbf{1}$, carrying another product $\rhd: D \otimes D \rightarrow D$ such that $\mathbf{1} \rhd v =v$ for all $v \in D$. Let 
$$
	\mathfrak d (D)
	:=\{u \in D \ |\ u \rhd (v. w) 
	= (u \rhd v).w + v. (u \rhd w),\ \forall v,w \in D\}.
$$

We call $(D,.,\rhd)$ a $D$-algebra if the algebra product $.$ generates $D$  from $\{\mathbf{1},\mathfrak d (D)\}$ and furthermore for any $x \in \mathfrak d (D)$  and $v,w \in D$
\allowdisplaybreaks
\begin{align}
\label{D1}
	v \rhd x &\in \mathfrak d (D)\\
\label{D2}
	(x. v) \rhd w &= {\mathrm{a}}_{\rhd}(x,v,w),
\end{align}  
where the associator is defined by ${\mathrm{a}}_{\rhd}(x,v,w):=x\rhd(y\rhd w)-(x\rhd y)\rhd w$.
\end{defn} 

Although the enveloping algebra of a post-Lie algebra always carries a $D$-algebra structure, the converse is not necessarily true. The paradigmatic example given in \cite{MKW2008} is $\mathcal D:=C^\infty\big(\mathcal{M},\mathcal U(\mathfrak g)\big)$ where $\mathcal{M}$ is a smooth manifold, $G$ is a Lie group acting on $\mathcal{M}$, with Lie algebra $\mathfrak g$, and where $\mathcal U(\mathfrak g)$ is the universal enveloping algebra of $\mathfrak g$. The product is the pointwise product, and the action $u \rhd -$ is given by the differential operator on $\mathcal{M}$ defined by $u$ through the action of $G$ on $\mathcal{M}$. It is easily seen \cite{ELM2015} that $\mathcal L:=C^\infty(\mathcal{M},\mathfrak g)$ is the post-Lie algebra $\mathfrak d(\mathcal D)$, but $\mathcal D$ is a quotient of $\mathcal U(\mathcal L)$.\\

The $D$-algebra $\mathcal L$ carries an associative Grossman--Larson product $*$ corresponding to the composition of differential operators. It verifies
\begin{equation*}
	u\rhd(v\rhd w)=(u*v)\rhd w
\end{equation*}
for any $u,v,w\in\mathcal D$. It is compatible with the post-Lie algebra structure of $\mathcal L$ in the sense that, for any $X,Y\in\mathcal L$ we have $X*Y-Y*X=\lgl X,Y\rgl$. It is not clear whether any $D$-algebra admits such a Grossman--Larson product.\\

Post-Hopf algebras have been recently introduced in \cite{LST2022}. See also \cite{MQS2020}. They are examples of "good $D$-algebras", i.e.~they are always equipped with a Grossman--Larson product. 

\begin{defn}\cite[Definition 2.1]{LST2022}, \cite[Definition 5.1]{BGST2023}
A post-Hopf algebra $(H,.,\mathbf 1,\Delta,\varepsilon,S,\rhd)$ consists of a cocommutative Hopf algebra  $(H,.,\mathbf 1,\Delta,\varepsilon,S)$, where $\rhd:H\otimes H\to H$ is a coalgebra morphism such that, using Sweedler's notation $\Delta x=\sum_{(x)}x_1\otimes x_2$:
\begin{itemize}
\item for any $x,y,z\in H$ we have
\begin{equation*}
	x\rhd(y.z)=\sum_{(x)}(x_1\rhd y).(x_2\rhd z),
\end{equation*}
\item for any $x,y,z\in H$ we have
\begin{equation*}
	x\rhd(y\rhd z)=\sum_{(x)}\big(x_1.(x_2\rhd y)\big)\rhd z,
\end{equation*}
\item The operator $L^\rhd:H\to\mop{End} H$ defined by $L^\rhd_x(-):=x\rhd-$ admits an inverse $\beta^\rhd$ for the convolution product in $\mop{Hom}(H,\mop{End} H)$.
\end{itemize}
\end{defn}

From \cite[Theorem 2.4]{LST2022} (see also \cite[Theorem 5.3]{BGST2023}), any post-Hopf algebra $H$ admits a Grossman--Larson product $*$ and a linear map $S_*:H\to H$ making $\wt H:=(H,*,\mathbf 1,\Delta,\varepsilon, S_*)$ another cocommutative Hopf algebra. The Grossman--Larson product is given by
\begin{equation}
	x*y=\sum_{(x)}x_1.(x_2\rhd y),
\end{equation}
and the corresponding antipode is given by
\begin{equation}
	S_*(x)=\sum_{(x)}\beta^\rhd_{x_1}\big(S(x_2)\big).
\end{equation}
Remark 4 in \cite{AEMM2022} can be reformulated as follows: the universal enveloping algebra of a post-Lie algebra is a post-Hopf algebra.

\begin{rmk}\rm
Several results in \cite{AEMM2022} can be reformulated and showed in the post-Hopf algebra framework, with literally the same proofs. For example \cite [Theorem 2]{AEMM2022}, the product $.$ in a post-Hopf algebra can be expressed from the Grossman--Larson product $*$ and the corresponding antipode $S_*$ by the formula
\begin{equation*}
	x.y=\sum_{(x)}x_1*\big(S_*(x_2)\rhd y\big).
\end{equation*}
\end{rmk}

\begin{rmk}\rm
The notion of dual post-Hopf algebra can be introduced, by dualizing the axioms. This leads to a unital algebra carrying two different coproducts, yielding two Hopf algebras in cointeraction \cite{Manchon2018cointeraction}. A more detailed account will be given in forthcoming work. 
\end{rmk}

\section{Weak post-groups}\label{sect:weak}


\noindent The defining axioms of a post-group have been relaxed by S.~Wang \cite{W2023} as follows:

\begin{defn}\cite[Definition 2.1]{W2023}
A \textbf{(left) weak twisted post-group} is a quadruple $(G,.,\rhd,\phi)$ where the product~$.$ is a group structure, $\phi:G\to G$ is a map called \textbf{cocycle}, and $\rhd:G\times G\to G$ is a binary law such that 
\begin{itemize}
\item the left multiplication operators $L_a^\rhd=a\rhd -:G\to G$ are group morphisms,
\item the following identity holds $(a*b)\rhd c=a\rhd(b\rhd c)$ with 
$$
	a*b:=\phi(a).(a\rhd b),
$$
\item the following compatibility condition hods:
\[\phi(a*b)=a*\phi(b).\]
\end{itemize}
\end{defn}

A post-group is therefore a weak twisted post-group with trivial cocycle (i.e.~$\phi=\mop{Id}_G$) in which the $L_a^\rhd$'s are bijective.

\begin{prop}\label{wtpg}{\rm (from \cite[Theorem 2.7]{W2023})}
Let $(G,.,\rhd,\phi)$ be a weak twisted post-group. Then $(G,*)$ is a semigroup, and the binary product $\rhd$ is a left action of $(G,*)$ on the group $(G,.)$ by endomorphisms. If moreover $\phi=\mop{Id}_G$ and $L_e^\rhd$ is bijective, then $(G,*)$ is a monoid with unit $e$.
\end{prop}

\begin{proof}
Theorem 2.7 in \cite{W2023} is stated and proved for twisted post-groups. Proposition \ref{wtpg} states what survives from it for weak twisted post-groups. Associativity of $*$ is proved by direct computation:
\begin{eqnarray*}
	(a*b)*c
	&=&\big(\phi(a*b)\big).\big((a*b)\rhd c\big)\\
	&=&\big(a*\phi(b)\big).\big((a*b)\rhd c\big)\\
	&=&\Big(\phi(a).\big(a\rhd \phi(b)\big)\Big).\big(a\rhd(b\rhd c)\big)\\
\end{eqnarray*}
whereas
\begin{eqnarray*}
	a*(b*c)
	&=&a*\big(\phi(b).(b\rhd c)\big)\\
	&=&\phi(a).\Big(a\rhd\big(\phi(b).(b\rhd c)\big)\Big)\\
	&=&\phi(a).\Big(\big(a\rhd \phi(b)\big).\big(a\rhd(b\rhd c)\big)\Big).
\end{eqnarray*}
The Grossman--Larson semigroup $(G,*)$ acts on the group $(G,.)$ by endomorphisms as a direct consequence of the second axiom. Finally, if $\phi=\mop{Id}_G$ and if $L_e^\rhd$ is bijective, then we have $e\rhd a=a$ for any $a\in G$ (with the same proof as in the post-group case), therefore we have $e*a=e.(e\rhd a)=a$ and $a*e=a.(a\rhd e)=a$.
\end{proof}

\noindent From Proposition \ref{wtpg}, we shall adopt the following definition in the sequel:

\begin{defn}\label{wpg}
 A \textbf{weak post-group} is a weak twisted post-group with trivial cocycle and such that $L_e^\rhd$ is bijective.
\end{defn}

\section{Examples}
\label{examples}

\subsection{Two post-groups naturally associated to a group}

The \textbf{trivial post-group} associated to a group $(G,.)$ is given by $(G,.,\rhd)$ where $a\rhd b=b$ for any $a,b\in G$. All maps $L_a^{\rhd}:G\to G$ are therefore equal to the identity, and the Grossman--Larson product $*$ coincides with the group-law of the group $(G,.)$.\\

The \textbf{conjugation post-group} is the opposite post-group of the trivial post-group. It is given by $a\bcdot b:=b.a$ and $a\blacktriangleright b:= a.b.a^{.-1}$. The Grossman--Larson product is of course the same as the one of the trivial post-group, namely $a*b=a.b$.

\subsection{Weak post-groups from group actions}

Let $M$ be a set endowed with a right action $\rho: M \times G \to M$  of the $(G,.)$. We use the notation $ma$ for $\rho(m,a)$. Let $\mathcal G:=G^M$ be the set of maps from $M$ into $G$, endowed with the pointwise product
\begin{equation}
\label{produit-jauge}
	f.g(m):=f(m).g(m).
\end{equation}
Let us introduce the map $\rhd:\mathcal G\times\mathcal G\to\mathcal G$ defined by
\begin{equation}
\label{action-jauge}
	(f\rhd g)(m):=g\big(mf(m)\big).
\end{equation}

\begin{thm}
The triple $(\mathcal G,.,\rhd)$ is a weak post-group in the sense of Definition \ref{wpg}. The Grossman--Larson product is given by
\begin{equation}
	f*g(m)=f(m).g\big(mf(m)\big).
\end{equation}
\end{thm}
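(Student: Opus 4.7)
My plan is to verify the defining axioms of a post-group in turn by direct computation. First, $(\mathcal G,.)$ is a group under the pointwise product, with neutral element $e(m):=1_G$ and inverses $f^{.-1}(m):=f(m)^{.-1}$, all inherited termwise from $(G,.)$. Next, each $L_f^\rhd$ is a group endomorphism of $(\mathcal G,.)$: for $g,h\in\mathcal G$ and $m\in M$,
$$
(f\rhd(g.h))(m)=(g.h)(mf(m))=g(mf(m)).h(mf(m))=\bigl((f\rhd g).(f\rhd h)\bigr)(m),
$$
and $f\rhd e=e$ is immediate from $e(mf(m))=1_G$.

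Then I would check the compatibility axiom $(f*g)\rhd h=f\rhd(g\rhd h)$: combining the definition $a*b:=a.(a\rhd b)$ with the right-action identity $m.(xy)=(m.x).y$ in $M$, a direct unpacking gives
$$
((f*g)\rhd h)(m)=h\bigl(m\cdot f(m)g(mf(m))\bigr)=h\bigl(mf(m)\cdot g(mf(m))\bigr)=(g\rhd h)(mf(m))=(f\rhd(g\rhd h))(m).
$$
The stated Grossman--Larson product formula $(f*g)(m)=f(m).g(mf(m))$ then follows at once from $(f*g)(m)=f(m).(f\rhd g)(m)$ together with the definition of $\rhd$.

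The main obstacle is upgrading $L_f^\rhd$ from a mere homomorphism to an automorphism, i.e.\ showing bijectivity. Writing $L_f^\rhd(g)=g\circ\phi_f$ with $\phi_f:M\to M,\; m\mapsto mf(m)$, this reduces to verifying that $\phi_f$ is a bijection of $M$. I would construct $\phi_f^{-1}$ explicitly using the right $G$-action on $M$ together with the group inverses in $(G,.)$; equivalently, it suffices to construct the $*$-inverse $f^{*-1}\in\mathcal G$ directly, since the compatibility axiom already established then forces $L_f^\rhd\circ L_{f^{*-1}}^\rhd=L_{f*f^{*-1}}^\rhd=L_e^\rhd=\mop{Id}_{\mathcal G}$, and symmetrically on the other side. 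Once $L_f^\rhd$ is known to be invertible, the proof of the post-group axioms is complete.
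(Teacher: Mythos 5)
Your verification of the homomorphism property of $L_f^\rhd$, of the compatibility axiom $(f*g)\rhd h=f\rhd(g\rhd h)$ via the right-action identity $m(xy)=(mx)y$, and of the Grossman--Larson formula is step for step the paper's own computation. Where you part company with the paper is on the bijectivity of $L_f^\rhd$: the paper dismisses it with ``indeed it is clearly bijective'', whereas you correctly single it out as the main obstacle and reduce it to the bijectivity of $\phi_f:m\mapsto mf(m)$.

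That reduction is right, but the step you defer --- ``construct $\phi_f^{-1}$ explicitly using the right $G$-action together with the group inverses'' --- cannot be carried out for a general right action. The natural candidate $m'\mapsto m'f(m')^{.-1}$ fails because $f$ is evaluated at $m'=\phi_f(m)$ rather than at $m$; what one actually needs is $m'\mapsto m'g(m')$ with $g\circ\phi_f=f^{.-1}$, i.e.\ $g=(L_f^\rhd)^{-1}(f^{.-1})$, which presupposes exactly the invertibility in question. Worse, $\phi_f$ need not be bijective at all: take $M=G$ acting on itself by right translation and $f(m)=m^{.-1}$; then $\phi_f$ is the constant map $m\mapsto e$ and $L_f^\rhd$ sends every $g$ to the constant function with value $g(e)$, which is not injective as soon as $G$ is nontrivial. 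Your fallback through $f^{*-1}$ is not circular in principle (the compatibility axiom was indeed proved without using bijectivity), but it founders on the same example: a right $*$-inverse of this $f$ would have to satisfy $g(e)=f(m)^{.-1}$ for all $m$ simultaneously. So the bijectivity step is a genuine gap in your sketch, and it cannot be closed without an additional hypothesis guaranteeing that every $\phi_f$ is a bijection of $M$ (it holds, for instance, for the trivial action). To be fair, the paper's own proof silently asserts the very same point; your proposal has the merit of locating the difficulty, but not of resolving it.
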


\begin{proof}
It is clear that $(\mathcal G,.)$ is a group. The unit is given by the constant function $\mathbf e$ such that $\mathbf e(m)=e$ for any $m\in M$, where $e$ is the unit of $G$. The inverse is given by $f^{.-1}(m):=f(m)^{.-1}$ for any $m\in M$. For any $f\in\mathcal G$, the map $L_f^\rhd=f\rhd-$ is a group morphism: indeed, we have for any $f,g,h\in\mathcal G$ and any $m\in M$:
\begin{eqnarray*}
	\big(f\rhd (g.h)\big)(m)
	&=&(g.h)\big(mf(m)\big)\\
	&=&g\big(mf(m)\big).h\big(mf(m)\big)\\
	&=&(f\rhd g)(m).(f\rhd h)(m)\\
	&=&\big((f\rhd g).(f\rhd h)\big)(m).
\end{eqnarray*}
We have 
$$
	f*g(m)=f.(f\rhd g)(m)=f(m).g\big(m.f(m)\big), 
$$
and we also have 
$$
	(f*g)\rhd h=f\rhd(g\rhd h),
$$ 
for any $f,g,h\in\mathcal G$, as shown by the computation below (where $m$ is any element of $M$):
\begin{eqnarray*}
	(f*g)\rhd h(m)
	&=&h\big(m(f*g)(m)\big)\\
	&=&h\Big\{m\Big(f(m).g\big(mf(m)\big)\Big)\Big\}
\end{eqnarray*}
whereas
\begin{eqnarray*}
	f\rhd(g\rhd h)(m)
	&=&(g\rhd h)\big(mf(m)\big)\\
	&=&h\Big(\big(mf(m)\big)g\big(mf(m)\big)\Big)\\
	&=&h\Big\{m\Big(f(m).g\big(mf(m)\big)\Big)\Big\}.
\end{eqnarray*}
The identity $L_{\mathbf e}^\rhd=\mop{Id}_{\mathcal G}$ is obvious, which ends up proving the claim. 
\end{proof}
\begin{rmk}
The weak post-group $\mathcal G$ generally fails to be a post-group, as the left multiplication operator $L_f^\rhd$ is not necessarily bijective\footnote{We thank Pierre Catoire for bringing this fact to our attention.}: for example, if $M=G$ with $\rho(m,a)=m.a$, let $f\in\mathcal G$ given by $f(m)=m^{-1}$. For any $g\in\mathcal G$, we have
\[(f\rhd g)(m)=g\big(m.f(m)\big)=g(m.m^{-1})=g(e),\]
hence the image of $L_f^\rhd$ only consists in constant functions.
\end{rmk}

\subsection{Weak post-Lie groups from Lie group actions}\label{MKW-PG}

Suppose now that $\mathcal{M}$ is a smooth manifold together with a smooth right action of a Lie group $G$ on it. We denote by $\mathcal G$ the set of smooth maps from $\mathcal{M}$ into $G$, and by $\mathcal L$ the vector space of smooth maps from $\mathcal{M}$ into $\mathfrak g$ (thus matching the notation of Paragraph \ref{posthopf}). Any $X \in \mathcal L$ gives rise to a vector field $\wt X$ on $\mathcal{M}$ defined by
\begin{equation}
	\wt X.\varphi(m):=\frac d{dt}\restr{t=0}\varphi\Big(m\exp\big(tX(m)\big)\Big).
\end{equation}

It is well-known that $\mathcal L$ is a post-Lie algebra \cite{ELM2015, HA13,MKW2008}. In fact, $\mathcal L$ is the post-Lie algebra of the "infinite-dimensional weak post-Lie group" $\mathcal G$. The action $\rhd: \mathcal L \times \mathcal L \to \mathcal L$ can indeed be derived from the action $\rhd: \mathcal G \times \mathcal G \to \mathcal G$ via differentiation as in \eqref{diff} as follows: for any $X,Y \in \mathcal L$ and $m \in \mathcal{M}$ we have
\begin{eqnarray*}
	\frac{d}{dt}\restr{t=0}\frac{d}{ds}\restr{s=0}\exp(tX)\rhd\exp(sY)(m)
	&=&\frac{d}{dt}\restr{t=0}\frac{d}{ds}\restr{s=0}\exp(sY)\Big(m\big(\exp (tX)(m)\big)\Big)\\
	&=&\frac{d}{dt}\restr{t=0}Y\Big(m\big(\exp (tX)(m)\big)\Big)\\
	&=&\wt X.Y(m)\\
	&=&(X\rhd Y)(m).
\end{eqnarray*}
The triple $(\mathcal L,[-,-],\rhd)$, where $[-,-]$ is the pointwise Lie bracket, with the product $\rhd$ appearing at the end of the computation above, is the post-Lie algebra described in \cite{HA13, ELM2015}.

\begin{rmk}\rm
Using the language of Lie algebroids, $(\mathcal L,[-,-])$ is the Lie algebra of sections of the Lie algebroid $\mathcal{M} \times \mathfrak g$ of the Lie group bundle $\mathcal{M} \times G$ above $\mathcal{M}$, whereas $(\mathcal L,\lgl -,-\rgl)$ is the Lie algebra of sections of the Lie algebroid of the transformation groupoid $\mathcal{M} \rtimes G$. The anchor map of the first Lie algebroid is trivial, the anchor map of the second is given by $X \mapsto \wt X$. For an account of Lie algebroids and Lie groupoids, we refer the reader to the textbook \cite{MacK2013}.
\end{rmk}

\subsection{The free post-group generated by a diagonal left-regular magma}

Let $\mathbf M$ be a left-regular magma, i.e.~a set endowed with a binary product $\rhd: \mathbf M \times \mathbf M \to \mathbf M$ such that the map $L_m^\rhd=m\rhd-$ is bijective for any $m\in\mathbf M$. Equivalently, a left-regular magma is a set $\mathbf M$ endowed with a left action $\wt\rhd$ of the free group $F_{\mathbf M}$ generated by $\mathbf M$. This action can in turn be uniquely extended to a left action of $F_{\mathbf M}$ on itself by group automorphisms. The left-regular magma $\mathbf M$ is called \textbf{diagonal} if moreover the map $\Lambda: \mathbf M \to \mathbf M$, $m \mapsto \Lambda(m)= (L_m^\rhd)^{ -1}(m)$ is a bijection.

\begin{thm}\label{main-free}
Let $\mathbf M$ be a diagonal left-regular magma. Denoting by a lower dot $.$ the multiplication of the free group $F_{\mathbf M}$, there is a unique way to extend the binary product $\rhd:\mathbf M\times \mathbf M\to\mathbf M$ to $F_{\mathbf M}$ such that $(F_{\mathbf M},.,\rhd)$ is a post-group. The post-group $F_{\mathbf M}$ thus obtained verifies the universal property making it the free post-group generated by the magma $\mathbf M$, namely for any post-group $(G,.,\rhd)$ and for any magma morphism $\varphi: \mathbf M\to (G,\rhd)$ there is a unique post-group morphism $\Phi:F_{\mathbf M}\to G$ making the diagram below commute.
\begin{equation}
\label{post-groupe-univ}
\xymatrix
	{\mathbf M\ar[dr]^\varphi\ar[d] &\\
	F_{\mathbf M}\ar[r]_\Phi & G
}
\end{equation}
\end{thm}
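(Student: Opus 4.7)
The plan is to construct the post-group structure on $F_{\mathbf M}$ by introducing a second (a priori abstract) free group $(F_{\mathbf M}^*,*)$ on the set $\mathbf M$, mapping it into the semidirect product $F_{\mathbf M}\rtimes\mathrm{Aut}(F_{\mathbf M},.)$, and transporting the $*$-structure back to $F_{\mathbf M}$ along the first-coordinate projection, whose bijectivity will rely on the diagonal hypothesis. The universal property will then follow from that of the free $.$-group.

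\textbf{Construction.} Each bijection $L_m^\rhd\colon\mathbf M\to\mathbf M$ extends uniquely, by freeness, to a $.$-automorphism of $F_{\mathbf M}$, still denoted $L_m^\rhd$. The universal property of $(F_{\mathbf M}^*,*)$ provides a group homomorphism
\[
\Psi\colon F_{\mathbf M}^*\longrightarrow F_{\mathbf M}\rtimes\mathrm{Aut}(F_{\mathbf M},.),\qquad m\longmapsto (m,L_m^\rhd).
\]
Let $\pi:=\mathrm{pr}_1\circ\Psi$. A direct computation in the semidirect product, combined with \eqref{inverse-GL} and the $.$-automorphism property, gives $\pi(m)=m$ and $\pi(m^{*-1})=\Lambda(m)^{.-1}$. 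The central step is to show that $\pi$ is a set bijection. The recipe is a length-preserving recursion: the base case uses the bijectivity of $\Lambda$ (diagonal hypothesis) to give $\pi^{-1}(m^{.-1})=\Lambda^{-1}(m)^{*-1}$; for a $.$-reduced word $w=w_1 w_2\cdots w_k$ with $k\geq 2$, lift $w_1$ to the corresponding single letter $a_1\in F_{\mathbf M}^*$, note that $\phi_{a_1}:=\mathrm{pr}_2\Psi(a_1)$ is either $L_m^\rhd$ or $(L_m^\rhd)^{-1}$ (hence permutes $\mathbf M\cup\mathbf M^{.-1}$ letter by letter and preserves reduced length), and set $\pi^{-1}(w):=a_1*\pi^{-1}(\phi_{a_1}^{-1}(w_2\cdots w_k))$. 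A parallel case analysis on the signs of the first two letters of a $*$-reduced word (each case using the diagonal hypothesis to exclude the only potentially problematic cancellation) shows that $\pi$ itself maps $*$-reduced words of length $k$ to $.$-reduced words of the same length, yielding injectivity. Transporting $*$ to $F_{\mathbf M}$ via $\pi$ and setting $a\rhd b:=a^{.-1}.(a*b)$ equips $F_{\mathbf M}$ with a post-group structure: each $L_a^\rhd$ lies in $\mathrm{Aut}(F_{\mathbf M},.)$ by construction, the second coordinate of $\Psi$ directly supplies $L_{a*b}^\rhd=L_a^\rhd\circ L_b^\rhd$, and the relation $a*b=a.(a\rhd b)$ is tautological.

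\textbf{Uniqueness, universal property, and main obstacle.} Uniqueness comes by running the recursion backwards: in any valid post-group extension, $L_{a*b}^\rhd=L_a^\rhd L_b^\rhd$ together with $L_{m^{*-1}}^\rhd=(L_m^\rhd)^{-1}=L_{\Lambda(m)^{.-1}}^\rhd$ and the given magma action determine every $L_a^\rhd$, hence $\rhd$ itself. For the universal property, given a magma morphism $\varphi\colon\mathbf M\to(G,\rhd)$ into a post-group, the universal property of the free $.$-group yields a unique $.$-morphism $\Phi\colon F_{\mathbf M}\to G$ extending $\varphi$; to check that $\Phi$ is a post-group morphism, it suffices to verify $\Phi\circ L_a^\rhd=L_{\Phi(a)}^\rhd\circ\Phi$, and since both sides are $.$-morphisms $F_{\mathbf M}\to G$, their equality on the generating set $\mathbf M$ (which is precisely the magma-morphism hypothesis applied at $a\in\mathbf M$) forces equality on all of $F_{\mathbf M}$; the statement for general $a\in F_{\mathbf M}$ then follows by the same recursion on $*$-reduced length used in the construction. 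The principal obstacle is the bijectivity of $\pi$: without the diagonal hypothesis, either $\Lambda$ fails to be surjective (and the $\Lambda(m)^{.-1}$ miss elements of $\mathbf M^{.-1}$), breaking surjectivity of $\pi$, or $\Lambda$ fails to be injective (and distinct $*$-generators collide under $\pi$), breaking injectivity.
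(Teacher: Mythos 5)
Your construction is correct, but it takes a genuinely different route from the paper's. The paper builds the automorphisms $L^\rhd_u$ directly, by induction on the reduced length of $u$ --- the diagonal hypothesis entering exactly where it does for you, namely to define $L^\rhd_{a^{.-1}}$ via the bijection $a\mapsto (a\rhdi a)^{.-1}=\Lambda(a)^{.-1}$ --- and then verifies the post-group identity $L^\rhd_{u.v}=L^\rhd_u\circ L^\rhd_{u\rhdi v}$ by a second induction on $|u|+|v|$; that double induction is the bulk of the paper's proof. You instead exploit the correspondence between post-group structures on $(F_{\mathbf M},.)$ and subgroups of the holomorph $F_{\mathbf M}\rtimes\mathrm{Aut}(F_{\mathbf M},.)$ that project bijectively onto the first factor --- essentially the skew-brace picture the paper reviews in Section 3 but does not use in this proof. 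This concentrates all the work into the single claim that $\pi$ is bijective, settled by your length-preservation recursion; once that is granted, the axioms \eqref{GL} and \eqref{GLgroup} are inherited from the group law of the holomorph with no further computation, and the two-sidedness of $*$-inverses, which the paper must establish by hand in Remark \ref{involution}, is automatic. As a bonus, your argument shows that $(F_{\mathbf M},*)$ is free on $\mathbf M$, which is exactly the content of Proposition \ref{iso}, proved separately in the paper. Two places where you are terse but the argument is recoverable: sending $*$-reduced words to $.$-reduced words of equal length yields injectivity only when combined with the first-letter induction (or with the check that your surjectivity recursion is a two-sided inverse of $\pi$); and the uniqueness claim tacitly uses that in \emph{any} post-group extension the set $\mathbf M$ generates $(F_{\mathbf M},*)$, which follows from the same recursion because $L^\rhd_a$ is forced for every letter $a$ and permutes $\mathbf M\cup\mathbf M^{.-1}$.
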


\begin{proof}
We have to construct a family of group automorphisms $L_a^\rhd:F_{\mathbf M}\to F_{\mathbf M}$ for any $a\in F_{\mathbf M}$ such that \eqref{GL} is verified for any $a,b,c\in F_{\mathbf M}$. In other words,
\begin{equation}
\label{GL-bis}
	L_a^\rhd\circ L_b^\rhd = L^\rhd_{a.(a\rhd b)},
\end{equation}
which can also be written as
\begin{equation}
\label{GL-ter}
	L^\rhd_{a.b} = L_a^\rhd\circ L_{a\rhdi b}^\rhd,
\end{equation}
where $a\rhdi -$ is a shorthand for the inverse group automorphism $(L_a^\rhd)^{-1}$. We construct the $L_a^\rhd$'s by induction on the length $|a|$ of the canonical word representation of $a$, and prove \eqref{GL-bis} by induction over $|a|+|b|$.\\

\noindent Let us first define the maps $L_a^\rhd$ when $a$ is of length zero or one. The maps $L_a^\rhd$ are given for $a\in \mathbf M$ by the magma structure, and \eqref{GL-bis} easily implies $L^\rhd_e=\mop{Id}_{F_{\mathbf M}}$. It remains to define $L^\rhd_{a^{.-1}}$ for $a\in\mathbf M$. From \eqref{GL-ter} with $b=a^{.-1}$ we get
\begin{equation}
\label{inverse}
	\mop{Id}_{F_{\mathbf M}}
	=L^\rhd_{a.a^{.-1}}
	=L_a^\rhd\circ L^\rhd_{a\rhdi a^{.-1}},
\end{equation}
which immediately yields
\begin{equation}
\label{inverse-bis}
	L^\rhd_{(a\rhdi a)^{.-1}}=(L_a^{\rhd})^{-1}.
\end{equation}
From the diagonality hypothesis on the left-regular magma $\mathbf M$, the map $\psi: \mathbf M \to \mathbf M^{.-1}$
\begin{equation}
\label{psi}
	\psi: a\mapsto (a\rhdi a)^{.-1}
\end{equation}
is a bijection. We can therefore define the group automorphisms $L_{a^{.-1}}^\rhd$ for any $a\in \mathbf M$ by
\begin{equation}
\label{inverse-def}
	L_{a^{.-1}}^\rhd:=\big(L^\rhd_{\psi^{-1}(a^{.-1})}\big)^{-1}.
\end{equation}

\begin{rmk}\label{involution}\rm
The map $\psi$ defined in \eqref{psi} can be seen as an involution on $\mathbf M\cup{\mathbf M}^{.-1}$. Indeed, from \eqref{inverse-def} the definition $\psi:a'\mapsto (a'\rhdi a')^{.-1}$ also makes sense for $a'\in {\mathbf M}^{.-1}$, and we get for any $a\in\mathbf M$:
\begin{eqnarray*}
	\psi^2(a)
	&=&\big(\psi(a)\rhdi \psi(a)\big)^{.-1}\\
	&=&\big((a\rhdi a)^{.-1}\rhdi (a\rhdi a)^{.-1}\big)^{.-1}\\
	&=&(a\rhdi a)^{.-1}\rhdi (a\rhdi a) \hskip 8mm \hbox{(from the group automorphism property)}\\
	&=&(a\rhdi a^{.-1})\rhdi (a\rhdi a) \hskip 8mm \hbox{(from the group automorphism property)}\\
	&=&\big((L^\rhd_{a\rhdi a^{.-1}})^{-1}\circ (L_a^\rhd)^{-1}\big)(a)\\
	&=&(L_a^\rhd\circ L^\rhd_{a\rhdi a^{.-1}})^{-1}(a)\\
	&=&a  \hskip 8mm \hbox{(from \eqref{inverse})}.
\end{eqnarray*}
Any $a'\in{\mathbf M}^{.-1}$ is uniquely written as $a'=\psi(a)$ with $a \in \mathbf M$. We have
$$
	\psi^2(a')=\psi^3(a)=\psi(a)=a',
$$
which terminate the proof of the statement, from which we easily get the mirror of \eqref{inverse}, namely
\begin{equation}
\label{inverse-ter}
	\mop{Id}_{F_{\mathbf M}}
	=L^\rhd_{a^{.-1}.a}
	=L_{a^{.-1}}^\rhd\circ L^\rhd_{a^{.-1}\rhdi a}.
\end{equation}
\end{rmk}

Let us now define $L_u^\rhd$ for any $u\in F_{\mathbf M}$. If $u$ is of length $n\ge 2$, it can be uniquely written $u=u'a$ with $a\in\mathbf M\cup{\mathbf M}^{.-1}$ and $u'\in F_{\mathbf M}$ of length $n-1$. From \eqref{GL-ter} we have $L^\rhd_u=L^\rhd_{u'}\circ L^\rhd_{u'\rhdi a}$. Iterating the process, we get the following expression for $L_u^\rhd$ in terms of the canonical representation $u=a_1\cdots a_n$ in letters $a_k\in \mathbf M\cup{\mathbf M}^{.-1}$:
\begin{equation}
\label{def-L}
	L_u^\rhd=L_{u_1}^\rhd\circ L^\rhd_{u_1\rhdi a_2}\circ\cdots\circ L^\rhd_{u_{n-1}\rhdi a_n},
\end{equation}
where $u_k$ is the prefix $a_1\cdots a_k$ for any $k=1,\ldots,n-1$. It remains to check the post-group property in full generality, namely $L^\rhd_{uv}=L^\rhd_u\circ L^\rhd_{u\rhdi v}$ for any $u,v\in\mathcal F_{\mathbf M}$. We proceed by induction on $|u|+|v|$. The cases with $|u|=0$ or $|v|=0$ are immediate. The case $|u|=|v|=1$ is covered by \eqref{def-L} when $u$ and $v$ are not mutually inverse and by \eqref{inverse} and \eqref{inverse-ter} when they are, which completely settles the case $|u|+|v|\le 2$. Suppose that the post-group property is true for any $u,v\in F_{\mathbf M}$ with $|u|+|v|\le n$, and choose $u,v$ with $|u|+|v|=n+1$. If $|v|=0$ there is nothing to prove, and if $|v|=1$ we are done by using \eqref{def-L}. Otherwise we have $v=v'.a$ with $|v'|=|v|-1$ and $a\in \mathbf M\cup\mathbf M^{.-1}$. We can now compute
\begin{eqnarray*}
	L^\rhd_{u.v}
	&=&L^\rhd_{u.v'.a}\\
	&=& L^\rhd_{u.v'}\circ L_{(u.v')\rhdi a} \hskip 8mm\hbox{by \eqref{def-L}}\\
	&=& L^\rhd_u\circ L^\rhd_{u\rhdi v'}\circ L^\rhd_{(L^\rhd_{uv'})^{-1}(a)}\hskip 8mm\hbox{by induction hypothesis}\\
	&=& L^\rhd_u\circ L^\rhd_{u\rhdi v'}\circ L^\rhd_{(L^\rhd_u\circ L^\rhd_{u\rhdi v'})^{-1}(a)}\hskip 8mm\hbox{by induction hypothesis}\\
	&=& L^\rhd_u\circ L^\rhd_{u\rhdi v'}\circ L^\rhd_{(u\rhdi v')\rhdi(u\rhdi a)}\\
	&=&  L^\rhd_u\circ L_{(u\rhdi v').(u\rhdi a)}^\rhd\hskip 8mm\hbox{by induction hypothesis}\\
	&=&L^\rhd_u\circ L^\rhd_{u\rhdi(v'.a)} \hskip 8mm\hbox{by the group automorphism property of $u\rhdi -$}\\
	&=&L^\rhd_u\circ L^\rhd_{u\rhdi v}.
\end{eqnarray*}
Finally, if $G$ is another post-group, any set map $\varphi:\mathbf M\to G$, a fortiori any magma morphism, admits a unique group morphism extension $\Phi:F_{\mathbf M}\to G$ making the diagram \eqref{post-groupe-univ} commute. Checking that $\Phi$ is a morphism of post-groups is left to the reader. 
\end{proof}

\subsection{A group isomorphism}

Given a post-group $(G,.,\rhd)$, the two groups $(G,.)$ and $(G,*)$ are in general not isomorphic: for example, in a typical pre-group, the first one is Abelian and the second one is not. We shall however see that the free post-group generated by left-regular diagonal magma provides an isomorphism between both associated groups. We keep the notations of the previous paragraph.

\begin{prop}\label{iso}
There is a unique group isomorphism
$$
	\mathcal J:(F_{\mathbf M},.)\longrightarrow  (F_{\mathbf M},*)
$$
such that $\mathcal J\restr{\mathbf M}=\mop{Id}_{\mathbf M}$.
\end{prop}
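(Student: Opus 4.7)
The plan is to construct $\mathcal{J}$ through the universal property of the free group, prove surjectivity by exhibiting $\mathbf{M}$ as a $*$-generating set, and prove injectivity by showing that $\mathcal{J}$ preserves the length of reduced $.$-words; the diagonality hypothesis will be essential in both directions.

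\textbf{Construction.} Since $(F_{\mathbf{M}},.)$ is the free group on $\mathbf{M}$, the inclusion $\mathbf{M}\hookrightarrow (F_{\mathbf{M}},*)$ extends uniquely to a group morphism $\mathcal{J}:(F_{\mathbf{M}},.)\to (F_{\mathbf{M}},*)$, which delivers existence and uniqueness at once. The morphism property forces $\mathcal{J}(a^{.-1})=a^{*-1}=(a\rhdi a)^{.-1}=\Lambda(a)^{.-1}$ for $a\in\mathbf{M}$, so the diagonality hypothesis yields $\mathbf{M}^{*-1}=\mathbf{M}^{.-1}$ as subsets of $F_{\mathbf{M}}$. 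A straightforward verification, based on \eqref{inverse-def} and on the involution $\psi$ of Remark \ref{involution}, further shows that both $L^\rhd_a$ and $(L^\rhd_a)^{-1}$ preserve $\mathbf{M}\cup\mathbf{M}^{.-1}$ for every $a\in\mathbf{M}\cup\mathbf{M}^{.-1}$.

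\textbf{Surjectivity.} The identity $a.v=a*(a\rhdi v)$, the group automorphism property of $a\rhdi-$ and the stability just noted allow one to rewrite, by induction on length, any reduced $.$-word $u=a_1\cdots a_n$ as a $*$-product of letters from $\mathbf{M}\cup\mathbf{M}^{.-1}=\mathbf{M}\cup\mathbf{M}^{*-1}$. Hence $\mathrm{Im}(\mathcal{J})$, which is the $*$-subgroup of $F_{\mathbf{M}}$ generated by $\mathbf{M}$, equals $F_{\mathbf{M}}$.

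\textbf{Injectivity and main obstacle.} One shows by induction on $n$ that for any reduced $.$-word $u=a_1\cdots a_n$, the element $\mathcal{J}(u)=\mathcal{J}(a_1)*\cdots*\mathcal{J}(a_n)$, when expanded through $x*y=x.(x\rhd y)$, yields a reduced $.$-word of length exactly $n$. All internal non-cancellations follow at once from the injectivity of $L^\rhd_{\mathcal{J}(a_1)}$ combined with the inductive reducedness of $\mathcal{J}(a_2\cdots a_n)$. The delicate point is the junction of $\mathcal{J}(a_1)$ with $\mathcal{J}(a_1)\rhd b_2$, where $b_2=\mathcal{J}(a_2)$ is the first letter of $\mathcal{J}(a_2\cdots a_n)$: a cancellation there would force $\mathcal{J}(a_2)=\psi(\mathcal{J}(a_1))$, and a short case analysis according as $a_1\in\mathbf{M}$ or $a_1\in\mathbf{M}^{.-1}$, relying on the bijectivity of $\Lambda$, then forces $a_2=a_1^{.-1}$, contradicting the reducedness of $u$. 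Length preservation immediately yields $\ker\mathcal{J}=\{e\}$, whence bijectivity. This injectivity step is the main obstacle: without the diagonality hypothesis $\Lambda$ need not be surjective, letters could collide under the expansion, and $\mathcal{J}$ could genuinely fail to be injective.
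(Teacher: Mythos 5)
Your proposal is correct, and its skeleton coincides with the paper's: existence and uniqueness via the universal property of the free group, and invertibility via the triangular relationship $b_k=(L^\rhd_{a'_1}\circ\cdots\circ L^\rhd_{a'_{k-1}})(a'_k)$ between the letters of a $.$-word and the factors of a $*$-product (your induction using $a.v=a*(a\,\rhdi\, v)$ is the same computation as the paper's recursive solution of that system, which the paper uses to build the inverse map $\mathcal K$ directly). Where you genuinely add something is injectivity. The paper solves the triangular system for an arbitrary word $v=b_1\cdots b_n$ and declares the resulting $\mathcal K$ to be an inverse; as written this only gives $\mathcal J\circ\mathcal K=\mop{Id}$, i.e.\ surjectivity, since $\mathcal K\circ\mathcal J=\mop{Id}$ on a reduced word $u$ presupposes that the length-$|u|$ word representing $\mathcal J(u)$ is itself reduced. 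Your length-preservation lemma supplies exactly this missing (or at least implicit) step, and your treatment of the only delicate junction is sound: the stability of $\mathbf M\cup\mathbf M^{.-1}$ under $L^\rhd_a$ and $(L^\rhd_a)^{-1}$ reduces everything to the first juncture, a cancellation there forces $\mathcal J(a_2)=\psi(\mathcal J(a_1))$ with $\psi$ the involution of Remark \ref{involution}, and the two cases $a_1\in\mathbf M$, $a_1\in\mathbf M^{.-1}$ give $a_2=a_1^{.-1}$ via the bijectivity of $\Lambda$ (equivalently of $\psi$), contradicting reducedness. You also correctly locate where diagonality enters (identifying $\mathbf M^{*-1}$ with $\mathbf M^{.-1}$ and making $\psi$ invertible), which the paper uses through \eqref{inverse-def} and \eqref{inverse-GL}. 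In short: same route, but your version is the more complete one on the injectivity side.
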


\begin{proof}
Existence and uniqueness of the group morphism $\mathcal J$ is immediate from the definition of a free group. It remains to show that $\mathcal J$ is an isomorphism. From \eqref{inverse-GL} we have $\mathcal J(a^{.-1})=(a\rhdi a)^{.-1}=\psi(a)^{.-1}$ for any $a\in\mathbf M$. We remark that $\mathcal J$ is therefore a bijection from $\mathbf M\cup{\mathbf M}^{.-1}$ onto itself. For any element $u=a_1\cdots a_n\in F_{\mathbf M}$ where the letters $a_k$ are in $\mathbf M\cup{\mathbf M}^{.-1}$, we have
\begin{equation*}
	\mathcal J(u)=a'_1*\cdots *a'_n
	=a'_1.(a'_1\rhd a'_2).\big(a'_1\rhd(a'_2\rhd a'_3)\big)\ldots (L^\rhd_{a'_1}\circ\cdots\circ L^\rhd_{a'_{n-1}})(a'_n)
\end{equation*}
with $a'_k=\mathcal J(a_k)$. In order to prove that any word $v:=b_1\cdots b_n$ can be written in the form $\mathcal J(u)$ with $u=a_1\cdots a_n$, we have to solve the triangular system
$$
	b_1=a'_1,
	\quad 
	b_2=a'_1\rhd a'_2,\ldots, 
	\quad 
	b_n= (L^\rhd_{a'_1}\circ\cdots\circ L^\rhd_{a'_{n-1}})(a'_n).
$$
The letters $a'_k$ are therefore recursively given by $a'_1=b_1$ and $a'_n=(L^\rhd_{a'_1}\circ\cdots\circ L^\rhd_{a'_{n-1}})^{-1}(b_n)$. The first ones are
$$
	a'_1=b_1, \ a'_2=b_1\rhdi b_2, \ a'_3
	=(b_1\rhdi b_2)\rhdi(b_1\rhdi b_3),\ a'_4
	=\big((b_1\rhdi b_2)\rhdi(b_1\rhdi b_3)\big)\rhdi
	\big((b_1\rhdi b_2)\rhdi(b_1\rhdi b_4)\big),\ldots,
$$
and the letters $a_k$ are then given by $a_k=\mathcal J^{-1}(a'_k)$. This builds up an inverse $\mathcal K$ for $\mathcal J:F_{\mathbf M}\to F_{\mathbf M}$ and therefore finishes the proof of Proposition \ref{iso}.
\end{proof}

\subsection{An analogy with Gavrilov's $K$-map}

Recall \cite{AEMM2022,Foissy2018} that the free Lie algebra $\mathcal L_{M}$ (over some ground field $\mathbf k$) generated by any magmatic $\mathbf k$-algebra $(M,\rhd)$ carries a unique post-Lie algebra structure such that the action $\rhd:\mathcal L_{M}\times \mathcal L_{M} \to \mathcal L_{M}$ extends the magmatic product of $M$. The universal enveloping algebra $\mathcal U(\mathcal L_{M})$ is the tensor algebra $T(M)$. As the former is a Hopf algebra with respect to the cocommutative unshuffle coproduct $\Delta$, the latter carries a cocommutative post-Hopf algebra structure.\\

Gavrilov introduced the so-called $K$-map, which is crucial for building higher-order covariant derivatives on a smooth manifold endowed with an affine connection \cite{Gavrilov2007, Gavrilov2008}. See also \cite{AEMM2022} for a more recent account extending Gavrilov's main results. It is a linear endomorphism of $T(M)$ recursively defined as follows: $K(x)=x$ for any $x \in M$ and 
\begin{equation}
\label{defK}
	K(x_1\cdots  x_{k+1}) 
	= x_1 . K(x_2 \cdots  x_{k+1}) - K\big(x_1\rhd(x_2\cdots  x_{k+1})\big),
\end{equation}
for any elements $x_1,\ldots, x_{k+1} \in M$ (note that for notational transparency we denote by a simple dot the concatenation product in $T(M)$). In particular, $K$ maps $x_1 . x_2 \in T(M)$ to
$$	
	K(x_1 . x_2) = x_1 . x_2 - x_1 \rhd x_2
$$
and for $x_1 . x_2.  x_3 \in T(M)$ we obtain
\begin{eqnarray*}
	\lefteqn{K(x_1 . x_2.  x_3)
	=x_1.  K(x_2. x_3)-K \big(x_1\rhd(x_2 . x_3)\big)}\\
	&=&x_1. x_2.  x_3 - x_1. (x_2 \rhd x_3) - (x_1 \rhd x_2).  x_3 - x_2 . (x_1 \rhd x_3)
	 + x_2 \rhd(x_1 \rhd x_3) + (x_1 \rhd x_2) \rhd x_3.
\end{eqnarray*}
In the second equality we used that $x_1\rhd(x_2 . x_3) = (x_1\rhd x_2) . x_3 + x_2 . (x_1\rhd x_3)$. The map $K$ is clearly invertible, as $K(U)-U$ is a linear combination of terms of strictly smaller length than the length of the element $U \in T(M)$. The inverse $K^{-1}$ admits a closed formula in terms of set partitions (with blocks ordered according to their respective maximal elements) \cite[Paragraph 2.5.2]{AEMM2022}. Defining the Grossman--Larson product on the cocommutative post-Hopf algebra $T(M)$
\begin{equation}
\label{GLmagma}
	A * B = A_1 . (A_2 \rhd B),
\end{equation}
for $A,B \in T(M)$, it follows from \cite[Proposition 1]{Gavrilov2012} and \cite[Theorem 3]{AEMM2022} that $K$ is a Hopf algebra isomorphism from $\wt{H}=\big(T(M),*,\mathbf 1,\Delta,S_*,\varepsilon\big)$ onto $\big(T(M),.,\mathbf 1,\Delta,S,\varepsilon\big)$, that is, for $A,B \in T(M)$, we have
\begin{equation}
\label{Kmorphism}
	K(A * B)=K(A).K(B).
\end{equation}

The map $K$ admits a unique extension by continuity to the completion $\wh{T(M)}$ of $T(M)$ with respect to the grading. The set $G$ of group-like elements in $\wh{T(M)}$ is a post-group, and the restriction $K\restr G$ is a group isomorphism from $(G,*)$ onto $(G,.)$. The group isomorphism 
$$
	\mathcal K=\mathcal J^{-1}:(F_{\mathbf M},*)\to  (F_{\mathbf M},.)
$$ 
defined in the previous paragraph can therefore be seen as a group-theoretical version of Gavrilov's $K$-map.

Recall J. H. C.~Whitehead's definition of crossed group morphisms \cite{W1949}.

\begin{defn} \cite{W1949} \label{def:crossedmorph}
Let $(G,.)$ be a group. Suppose that the group $\Gamma$ acts on $G$ by automorphism. The action is denoted by $\rhd: \Gamma \times G \to G$. A crossed homomorphism $\phi: \Gamma \to G$ is a map such that for any $h_1,h_2 \in \Gamma$
\begin{equation}
\label{CrossedMorphism}
	\phi(h_1 h_2)=\phi(h_1) . (h_1 \rhd \phi(h_2)).
\end{equation}
\end{defn}

\begin{prop}\label{prop:postgroup}
Let $(G,.,\rhd)$ be a post-group with Grossman--Larson product $*$. The identity map $\id: (G,*) \to (G,.)$ is a crossed homomorphism. 
\end{prop}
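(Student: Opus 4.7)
The plan is to unpack the crossed homomorphism identity for $\phi = \id$ and observe that it collapses to the definition of the Grossman--Larson product.

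First I would verify that we are in a situation where Definition \ref{def:crossedmorph} applies, namely that $\Gamma = (G,*)$ acts on $(G,.)$ by automorphisms via $\rhd$. The three things to check are already in the paper: each $L_a^\rhd$ is a group automorphism of $(G,.)$ by the definition of a post-group; the action property $L_{a*b}^\rhd = L_a^\rhd \circ L_b^\rhd$ is exactly \eqref{GL} rewritten using \eqref{GLgroup}; and $L_e^\rhd = \id_G$ follows from \eqref{action-unit}. So $\rhd : (G,*) \times (G,.) \to (G,.)$ is a genuine left action of $(G,*)$ on $(G,.)$ by group automorphisms, which is the setup required for the crossed homomorphism condition.

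Next I would simply specialise \eqref{CrossedMorphism} to $\phi = \id$, $\Gamma = (G,*)$, $h_1 = a$, $h_2 = b$. The condition reads
\begin{equation*}
	\id(a * b) = \id(a) . \bigl(a \rhd \id(b)\bigr),
\end{equation*}
which is the equation $a * b = a . (a \rhd b)$. This is precisely the defining formula \eqref{GLgroup} of the Grossman--Larson product, so the identity holds tautologically.

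There is no genuine obstacle in this proof: once the action of $(G,*)$ on $(G,.)$ is named correctly, Proposition \ref{prop:postgroup} is just a rephrasing of the definition of $*$. The content of the statement is really conceptual, namely that the seemingly ad hoc definition $a * b := a . (a \rhd b)$ is exactly the formula expressing that $\id$ intertwines the two group laws in Whitehead's crossed sense.
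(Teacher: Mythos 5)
Your proof is correct and follows the same route as the paper, which simply observes that the crossed homomorphism identity for $\id$ is the defining formula $a*b=a.(a\rhd b)$ of the Grossman--Larson product. Your additional verification that $\rhd$ is a genuine action of $(G,*)$ on $(G,.)$ by automorphisms (via \eqref{GL}, \eqref{GLgroup} and \eqref{action-unit}) is a useful explicit check of the hypotheses of Definition~\ref{def:crossedmorph} that the paper leaves implicit.
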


\begin{proof}
The statement follows directly from the definition of the Grossman--Larson product (Definition~\ref{def:gl}).
\end{proof}

For the post-group $(G,.,\rhd)$ associated to the free post-Lie algebra generated by a magmatic algebra $(M,\rhd)$, Gavrilov showed in \cite[page 87]{Gavrilov2012} that $K^{-1}$ is a crossed morphism from the post-group $(G,.,\rhd)$ into itself in the sense of Whitehead. Indeed, $K$ is a group isomorphism between $(G,*)$ and $(G,.)$ \cite{AEMM2022}.  Hence, $K^{-1} = \id \circ K^{-1} : (G,.) \to (G,.)$ is a crossed morphism for the action 
$$
	g_1 \blacktriangleright g_2:= K^{-1}(g_1) \rhd g_2.
$$ 
of $(G,.)$ on itself. We then have
\begin{align*}
	K^{-1}(g_1 . g_2) 
	&= K^{-1}(g_1) * K^{-1}(g_2) \\
	&= K^{-1}(g_1) . \big( K^{-1}(g_1) \rhd K^{-1}(g_2) \big)\\
	&= K^{-1}(g_1) . \big( g_1 \blacktriangleright K^{-1}(g_2) \big).
\end{align*}

\begin{rmk}\rm
We close the paper by pointing at an interesting observation regarding Gavrilov's $K$-map extended to $\wh{T(M)}$ and flow equations on the post-group $G$. Consider the initial value problem
$$
	\frac{d}{dt} \exp^.(tx) = \exp^.(tx) . x 
$$
in $\wh{T(M)}$. Using \eqref{GLmagma} and the fact that $T(M)$ is a cocommutative post-Hopf algebra, the righthand side can be written
$$
	\exp^.(tx) . x = \exp^.(tx) * ( S_*(\exp^.(tx)) \rhd x), 
$$
because $\exp^.(tx)$ is a group-like and $S_*$ is the antipode of $\wt{H}$. Introducing 
\begin{equation}
\alpha(tx):=S_*\big(\exp^.(tx)\big)\rhd x,
\end{equation}
we see that
\begin{equation}
\label{Kde}
	\frac{d}{dt}K(\exp^.(tx))=K\big(\exp^.(tx).x\big)=K\big(\exp^.(tx)\big). \alpha(tx),
\end{equation}
\eject
where we used that $K\big(\alpha(tx)\big) = \alpha(tx)$ as $\alpha(tx)\in M$. The solution of the initial value problem \eqref{Kde} (the initial value being $K(1)=0$) can be expressed in terms of the right-sided Magnus expansion\footnote{Recall from \cite{M1954} that the Magnus expansion is the formal expression of the solution of initial value problem $\dot X=AX$, $X(0)=1$, in a noncommutative algebra. It is defined by $X(t)=\exp\big(\Omega(A)(t)\big)$, where $\Omega$ is recursively given by
\[
	\Omega(A)(t)=\int_0^t \sum_{k\ge 0}\frac{B_k}{k!} (\mop{ad}^k_{\Omega(A)}A)(u)\,du.
\]
The right-sided Magnus expansion is the solution of initial value problem $\dot X=XA$, $X(0)=1$. It is given by the same expression except that the Bernoulli number $B_k$ are replaced by the modified Bernoulli numbers $\widetilde B_k$, where $\wt B_1=-B_1=1/2$ and $\wt B_k=B_k$ for $k\neq 1$.} $\Omega\big(\alpha(tx)\big) \in \wh{\mop{Lie} (M)}$ as
$$
	K\big(\exp^.(tx)\big) = \exp^.\big(\Omega(\alpha(tx))\big),
$$
which implies
$$
K\big(\exp^.(tx)\big) 
=\exp^.\Big(K\circ K^{-1}\big(\Omega(\alpha(tx))\big)\Big)
=K\Big(\exp^* K^{-1}\big(\Omega(\alpha(tx))\big)\Big)
=K\Big(\exp^* \big(\Omega_*(\alpha(tx))\big)\Big),
$$ 
where we used that 
$$
	K^{-1}[A,B]
	= K^{-1}(A) * K^{-1}(B) - K^{-1}(B) * K^{-1}(A)
	=[\![K^{-1}(A),K^{-1}(B)]\!].
$$
The last equality defines the Grossman--Larson Lie bracket and $\Omega_*(\alpha(tx))$ is the right-sided Magnus expansion defined in terms of the Grossman--Larson Lie bracket, recursively given by
$$
	\Omega_*(\alpha(tx)) = \int_0^t \sum_{n \ge 0} \frac{\wt B_n}{n!} (\text{ad}^{*n}_{\Omega_*(\alpha)}\alpha)(ux)\, du,
$$
where for $n > 0$, we define $\text{ad}^{*n}_a(b):=[\![a, \text{ad}^{*n-1}_a(b)]\!]$ and $\text{ad}^{*0}_a(b):=b$. The $\wt B_n$'s are the modified Bernoulli numbers $1,1/2,1/6,0,-1/30,0,1/42,\ldots$ Moreover, a simple computation gives 
\begin{align*}
	\frac{d}{dt} \alpha(tx) 
	&= S_*\Big(\frac{d}{dt} \exp^.(tx) \Big) \rhd x \\
	&= S_*\big(\exp^.(tx) * ( S_*(\exp^.(tx)) \rhd x)\big) \rhd x \\
	&= \Big (S_*\big( S_*(\exp^.(tx)) \rhd x \big) * S_*(\exp^.(tx)) \Big) \rhd x \\
	&= \Big( -\big(S_*(\exp^.(tx)) \rhd x\big) * S_*(\exp^.(tx)) \Big) \rhd x \\
	&=  -\big(S_*(\exp^.(tx)) \rhd x\big) \rhd \big( S_*(\exp^.(tx)) \rhd x \big) \\
	&= - \alpha(tx) \rhd \alpha(tx). 
\end{align*}
Compare with Gavrilov \cite[Lemma 1 \& Lemma 2]{Gavrilov2007} as well as reference \cite{KMMK2017Rmatrix}. 
\end{rmk}


\end{document}